\crefname{hypothesis}{Hypothesis}{Hypotheses}
\title{Space-time parallel iterative solvers for the integration of parabolic problems\thanks{Submitted to the editors June 30, 2024.
\funding{The work of I\~nigo Jimenez-Ciga was supported by Public University of Navarre (PhD Grant). The work of all the authors was supported by Grant PID2022-140108NB-I00 funded by MCIN/AEI/10.13039/501100011033 and by ``ERDF A way of making Europe''.}}}
\author{Andr\'{e}s Arrar\'{a}s\thanks{Department of Statistics, Computer Science and Mathematics, Public University of Navarre, Pamplona, Spain 
  (\email{andres.arraras@unavarra.es}, \email{inigo.jimenez@unavarra.es}, \email{laura.portero@unavarra.es}).}
\and Francisco J. Gaspar\thanks{Department of Applied Mathematics, University of Zaragoza, Zaragoza, Spain
  (\email{fjgaspar@unizar.es}).}
\and I\~{n}igo Jimenez-Ciga\footnotemark[2]
\and Laura Portero\footnotemark[2]}
\begin{document}
	
	\maketitle
	
	\begin{abstract}
		In view of the existing limitations of sequential computing, parallelization has emerged as an alternative in order to improve the speedup of numerical simulations. In the framework of evolutionary problems, space-time parallel methods offer the possibility to optimize parallelization. In the present paper, we propose a new family of these methods, built as a combination of the well-known parareal algorithm and suitable splitting techniques which permit us to parallelize in space. In particular, dimensional and domain decomposition splittings are considered for partitioning the elliptic operator, and first-order splitting time integrators are chosen as the propagators of the parareal algorithm to solve the resulting split problem.
		
		The major contribution of these methods is that, not only does the fine propagator perform in parallel, but also the coarse propagator. Unlike the classical version of the parareal algorithm, where all processors remain idle during the coarse propagator computations, the newly proposed schemes utilize the computational cores for both integrators. A convergence analysis of the methods is provided, and several numerical experiments are performed to test the solvers under consideration.
	\end{abstract}
	
	\begin{keywords}
		iterative solvers, parabolic problems, parareal algorithm, space-time parallel methods, splitting methods
	\end{keywords}
	
	\begin{MSCcodes}
		65M12, 65M55, 65Y05, 65Y20
	\end{MSCcodes}
	
	\section{Introduction}
	In recent years, numerical simulations have become a useful tool in order to analyze mathematical models which rule phenomena that belong to many fields of knowledge. These simulations are computed by increasingly complex computer architectures, aiming to meet the demand for faster numerical results. Nowadays, the solution of most problems of practical interest are burdensome, on account of the number of operations and memory involved in the process. Given the existing limits of improvement of clock speeds, parallelization has emerged as a means to increase the speedup of simulations.
	
	In the context of evolutionary problems, initial attempts at parallelization focused on the spatial variables. However, given the increasing number of cores in modern supercomputers, parallelization in time has been considered since the 1960s (cf. \cite{Nievergelt1964}) as a way to optimize the use of available processors. For a general survey on the most relevant parallel-in-time integrators, as well as a detailed classification of them, we refer the reader to \cite{Gander2015} and \cite{OngSchroder2020}.
	
	This work considers the parareal algorithm as the time parallel time integrator of the proposed methods. First introduced in \cite{LionsMadayTurinici2001}, it is based on two so-called propagators, namely, the coarse and fine propagators. While the former is cheap and inaccurate, the latter is expensive but accurate. The main advantage of this algorithm is that this last propagator can be computed in parallel. For a deep analysis of the stability and convergence properties of the algorithm, we refer the reader to the seminal works \cite{GanderVandewalle2007} and \cite{StaffRonquist2005}.
	
	On the other hand, space parallelization is implemented using splitting techniques. The key idea is to partition the elliptic operator into simpler suboperators so that, by using time-splitting methods, the resulting linear systems become uncoupled. This allows the ensuing subsystems to be solved independently. We consider two splitting techniques for the partition of the elliptic operator, namely, dimensional and domain decomposition splittings. The former provides a partition based on the spatial variables that involve the elliptic term (cf. \cite{HundsdorferVerwer2003}), whereas the latter considers a suitable domain decomposition, and a subsequent partition of unity subordinate to it, to define the suboperators that form the splitting (see \cite{MathewPolyakovRussoWang1998} for more details). References \cite{ArrarasintHoutHundsdorferPortero2017} and \cite{ArrarasPortero2015} show some applications of domain decomposition splittings. In order to solve the resulting partitioned problems, this paper considers two first-order splitting time integrators, the fractional implicit Euler scheme (cf. \cite{Marchuk1990}) and the Douglas-Rachford method (cf. \cite{DouglasRachford1956}).
	
	The main contribution of our proposal is to consider those time-splitting integrators as the coarse and fine propagators of the parareal algorithm. Although a first approach was proposed in \cite{ArrarasGasparPorteroRodrigo2023} for the fractional implicit Euler scheme as both propagators of the parareal algorithm, in this paper we extend this idea in order to consider also the Douglas-Rachford method. Moreover, a rigorous theoretical analysis of the convergence properties is provided, together with a rich variety of numerical test experiments. There are two main reasons that justify the use of splitting techniques combined with the parareal algorithm. First, the fine propagator can be computed faster. In addition, the coarse propagator becomes even cheaper, and all the processors, which in the classical version remain unused for the computation of the coarse propagator, can be devoted to implementing space parallelization of such propagator. We prove the convergence of the newly proposed methods using the concept of convergence factor, first introduced by Gander and Vandewalle in \cite{GanderVandewalle2007}, yielding similar results to the ones obtained in \cite{Wu2015}, \cite{Wu2016}, and \cite{WuZhou2015}.
	
	The parareal algorithm is known for being a simple parallel-in-time method, while efficient for the integration of parabolic problems. However, it has a significant drawback, that is, it is not suitable for hyperbolic problems. In this paper, we consider a general evolutionary reaction-diffusion problem with linear reaction term. After following the method of lines, the proposed methods are implemented in order to solve the resulting semidiscrete problem. In addition, the robustness and scalability of the solvers are analyzed, performing certain numerical test experiments.
	
	The rest of the paper is organized as follows. In \Cref{sec:split}, we introduce the model problem and the splitting techniques that will permit us to parallelize in space. \Cref{sec:par} provides the formulation and theoretical details of the parareal algorithm, and the convergence factor is defined for the partitioned Dahquist test equation. In \Cref{sec:conv}, we formulate the space-time parallel methods and their convergence properties are proven. Finally, \Cref{sec:num} contains some illustrative numerical experiments and some concluding remarks are shown in \Cref{sec:concl}.
	
	\section{Model problem and splitting techniques}
	\label{sec:split}
	
	In this section, we propose the particular model problem under consideration in the present paper. Moreover, we describe two partitioning strategies for its elliptic term. Then, once the problem is discretized in space following the method of lines, we consider two first-order splitting time schemes in order to integrate the resulting partitioned semidiscrete problem. The linear systems that arise at each internal stage of the discrete problem are in fact uncoupled systems that can be solved in parallel.
	
	\subsection{Model problem and partitioning techniques}
	\label{susec:tyspl}
	
	Let us consider the following initial-boundary value problem
	\begin{equation} \label{cont_problem}
		\left\{ \begin{array}{l l}
			u_t = Lu + f, & \text{in } \Omega \times (0,T], \\[0.5ex]
			u = g, & \text{on } \Gamma \times (0,T], \\[0.5ex]
			u = u_0, & \text{in } \overline{\Omega} \times \{0\},
		\end{array} \right.
	\end{equation}
	where $\Omega \subset \mathbb{R}^d$ is a bounded connected domain, whose boundary is denoted by $\Gamma = \partial \Omega$. Moreover, let us assume sufficient smoothness on $u = u(\mathbf{x},t)$, $f = f(\mathbf{x},t)$, $g = g(\mathbf{x},t)$, and $u_0 = u_0(\mathbf{x})$, together with suitable compatibility conditions. Finally, let us consider $Lu = \nabla \cdot (D \nabla u) - cu$ as the elliptic operator of the problem, with $c > 0$ and $D = D(\mathbf{x})$ a $d \times d$ symmetric positive definite tensor, which, for some $0 < \kappa_1 \leq \kappa_2 < \infty$, satisfies
	\begin{displaymath}
		\kappa_1\xi^T\xi \leq \xi^T D(\mathbf{x})\xi \leq \kappa_2 \xi^T \xi,
	\end{displaymath} 
	for every $\mathbf{x} \in \Omega$ and $\xi \in \mathbb{R}^d$.
	
	We propose suitable partitions of the elliptic operator, i.e., $L = L_1 + \cdots + L_M$, such that the resulting split problems can be solved applying parallelization. 
	In particular, we consider two splitting strategies, namely, dimensional and domain decomposition splittings.
	
	First, dimensional splitting is a rather classical splitting technique which provides a partition of the elliptic operator into $M = d$ suboperators. Basically, each suboperator comprises the partial derivatives which correspond to each of the spatial variables (cf. \cite{HundsdorferVerwer2003}). A notorious drawback of this partitioning strategy is that mixed partial derivatives cannot be handled directly (for classical schemes that cope with this issue, we refer the reader to \cite{MckeeMitchell1970}, \cite{Samarskii1964}, or \cite{Sofronov1963}). Furthermore, the implementation of dimensional splitting becomes a handicap for complex spatial discretizations or domains. Taking these restrictions into account, let us assume that tensor $D$ is diagonal, i.e., $D = \text{diag}(d_{11},d_{22},\dots,d_{MM})$. Then, the          dimensional splitting for operator $L$ can be defined as follows
	\begin{displaymath}
		L_ju = (d_{jj} u_{x_j})_{x_j} - \frac{1}{M}cu, \quad j \in \{1,\dots,M\},
	\end{displaymath}
	where $x_j$ denotes the $j$-th spatial variable. Notice that the linear reactive term is split into $M$ equal terms added to the corresponding diffusion suboperators.
	
	On the other hand, we consider a more flexible partitioning strategy, which will be referred to as domain decomposition splitting. First proposed by Vabishchevich in \cite{Vabishchevich1989} and further extended in \cite{Vabishchevich2008}, this technique provides a partition of the elliptic operator based on a suitable domain decomposition. The present paper focuses on the version described by \cite{MathewPolyakovRussoWang1998}.
	
	Let us consider a domain decomposition of $\Omega$ into $M$ overlapping subdomains, denoted by $\{\Omega_j\}_{j=1}^M$. Each subdomain is defined as the union of $q$ connected and pairwise disjoint components $\Omega_j = \bigcup_{i=1}^q \Omega_{j,i}$, for $j \in \{1,\dots,M\}$. The overlapping size of the subdomains is henceforth denoted by $\beta$. Figure \ref{fig:domaindecomposition} depicts the construction of the decomposition for two overlapping subdomains, such that each of them is defined as the union of $q$ vertical strips. Remarkably, each strip overlaps only with its two neighboring strips, corresponding to the opposite subdomain. A thorough description on the construction of the decomposition is detailed in \cite{ArrarasPortero2015}. Moreover, \cite{ArrarasGasparPorteroRodrigo2023}, \cite{ArrarasintHoutHundsdorferPortero2017} and \cite{PorteroArrarasJorge2010} show different domain decompositions considering four or six subdomains for various geometries.
	
	
	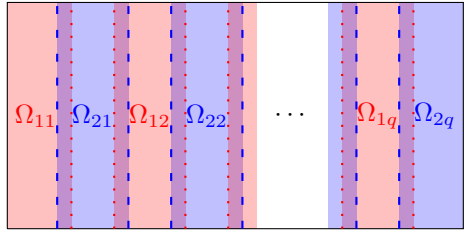
\begin{figure}[t]
		\centering
		\begin{tikzpicture}[scale=0.75]
			\fill[nearly transparent, red] (0,0) -- (1.125,0) -- (1.125,4) -- (0,4);
			\fill[nearly transparent, blue] (0.875,0) -- (2.125,0) -- (2.125,4) -- (0.875,4);
			\fill[nearly transparent, red] (1.875,0) -- (3.125,0) -- (3.125,4) -- (1.875,4);
			\fill[nearly transparent, blue] (2.875,0) -- (4.125,0) -- (4.125,4) -- (2.875,4);
			\fill[nearly transparent, red] (5.875,0) -- (7.125,0) -- (7.125,4) -- (5.875,4);
			\fill[nearly transparent, blue] (6.875,0) -- (8,0) -- (8,4) -- (6.875,4);
			\fill[nearly transparent, red] (3.875,0) -- (4.375,0) -- (4.375,4) -- (3.875,4);
			\fill[nearly transparent, blue] (5.625,0) -- (6.125,0) -- (6.125,4) -- (5.625,4);
			\draw (0,0) -- (8,0) -- (8,4) -- (0,4) -- (0,0);
			\draw[red,thick,loosely dotted] (1.125,0) -- (1.125,4);
			\draw[blue,thick,loosely dashed] (0.875,0) -- (0.875,4);
			\draw[red,thick,loosely dotted] (1.875,0) -- (1.875,4);
			\draw[blue,thick,loosely dashed] (2.125,0) -- (2.125,4);
			\draw[red,thick,loosely dotted] (3.125,0) -- (3.125,4);
			\draw[blue,thick,loosely dashed] (2.875,0) -- (2.875,4);
			\draw[red,thick,loosely dotted] (3.875,0) -- (3.875,4);
			\draw[blue,thick,loosely dashed] (4.125,0) -- (4.125,4);
			\draw[red,thick,loosely dotted] (5.875,0) -- (5.875,4);
			\draw[blue,thick,loosely dashed] (6.125,0) -- (6.125,4);
			\draw[red,thick,loosely dotted] (7.125,0) -- (7.125,4);
			\draw[blue,thick,loosely dashed] (6.875,0) -- (6.875,4);
			\path (0.5,2) node [red] {$\Omega_{11}$};
			\path (1.5,2) node [blue] {$\Omega_{21}$};
			\path (2.5,2) node [red] {$\Omega_{12}$};
			\path (3.5,2) node [blue] {$\Omega_{22}$};
			\path (5,2) node [black] {$\cdots$};
			\path (6.5,2) node [red] {$\Omega_{1q}$};
			\path (7.5,2) node [blue] {$\Omega_{2q}$};
		\end{tikzpicture}
		\caption{The overlapping domain decomposition formed by two subdomains as the union of $q$ vertical strips.}
		\label{fig:domaindecomposition}
	\end{figure}
	
	In this setting, let us define a partition of unity $\{\rho_j(\mathbf{x})\}_{j=1}^M$ subordinate to the domain decomposition under consideration. According to \cite{MathewPolyakovRussoWang1998}, the functions that form the partition must belong to $\mathcal{C}^{\infty}(\overline{\Omega})$. Moreover,  the properties $\text{supp}(\rho_j(\mathbf{x})) \subseteq \overline{\Omega}_j$, $0 \leq \rho_j(\mathbf{x}) \leq 1$, and $\sum_{k=1}^M \rho_k(\mathbf{x}) = 1$ should be fulfilled, for every $\mathbf{x} \in \overline{\Omega}$ and $j \in \{1,\dots,M\}$. We then define the functions as
	\begin{displaymath}
		\rho_j(\mathbf{x}) = \left\{ \begin{array}{l l}
			0, & \text{in } \overline{\Omega} \setminus \overline{\Omega}_j, \\
			h_j(\mathbf{x}), & \text{in } \bigcup_{k=1; k \neq j}^M (\overline{\Omega}_j \cap \overline{\Omega}_k), \\[0.5ex]
			1, & \text{in } \overline{\Omega}_j \setminus \bigcup_{k=1; k \neq j}^M (\overline{\Omega}_j \cap \overline{\Omega}_k),
		\end{array} \right.
	\end{displaymath}
	for $j \in \{1,\dots,M\}$, where $h_j(\mathbf{x})\in\mathcal{C}^{\infty}(\overline{\Omega})$ is defined such that the previous conditions hold.
	
	For the particular domain decomposition defined in Figure \ref{fig:domaindecomposition}, the following functions satisfy the stated conditions
	\begin{equation} \label{partition_unity_M2}
		\rho_1(\mathbf{x}) = \left\{ \begin{array}{l l}
			0, & \text{in } \overline{\Omega} \setminus \overline{\Omega}_1, \\
			h(x), & \text{in } \overline{\Omega}_1 \cap \overline{\Omega}_2, \\
			1, & \text{in } \overline{\Omega}_1 \setminus \overline{\Omega}_2,
		\end{array} \right.
	\end{equation}
	and $\rho_2(\mathbf{x}) = 1 - \rho_1(\mathbf{x})$, where $0 \leq h(x) \leq 1$ is a suitable trigonometric function such that $\rho_j \in \mathcal{C}^{\infty}(\overline{\Omega})$, for $j \in \{1,2\}$, and $\mathbf{x} = (x,y) \in \overline{\Omega}$.
	
	Let us then consider the following suboperators of the elliptic term as the partition
	\begin{displaymath}
		L_j u = \nabla \cdot (\rho_j D\nabla u) - \rho_j c u, \quad j \in \{1,\dots,M\}.
	\end{displaymath}
	
	\subsection{Method of lines and time-splitting integrators}
	
	Let us recall the initial-boundary value problem \cref{cont_problem}, together with the partitioned elliptic operator $L = L_1 + \cdots + L_M$. Although the present paper only considers in practice the partitioning strategies described in \Cref{susec:tyspl} for $M = 2$, we keep the general description and analysis for the case of considering an arbitrary  number $M$ of splitting terms. This generalization enables the reader to consider problems in three spatial dimensions, as well as domain decompositions with more than two subdomains.
	
	Let us first discretize problem \cref{cont_problem} in space. For that purpose, we define a suitable mesh over the spatial domain $\Omega$, hereafter denoted by $\Omega_h$. The parameter $h$ represents the maximal grid spacing of such a mesh. Then, let us consider a suitable space discretization of the problem, e.g., finite differences, finite elements, or finite volumes. The resulting semidiscrete problem is formulated as
	\begin{equation} \label{semidisc_prob}
		\left\{ \begin{array}{l l}
			U'(t) = (A_1 + \cdots + A_M)U(t) + F(t), & t \in (0,T], \\ U(0) = \mathcal{P}u_0,
		\end{array}\right.
	\end{equation}
	where $U$ and $A_i$ are the approximations of $u$ and $L_i$, respectively, for $i \in \{1,\dots,M\}$, $F$ comprises the approximation of $f$ and the boundary condition $g$, and $\mathcal{P}$ is a suitable projection or restriction operator.
	
	In order to solve the partitioned problem \cref{semidisc_prob}, we introduce suitable first-order splitting time integrators. Throughout this subsection, we consider a uniform partition of the time interval $[0,T]$ into $N$ subintervals. The set of points that forms such time mesh is denoted by $\{t_0,t_1,\cdots,t_N\}$, where $0 = t_0 < t_1 < \cdots < t_N = T$, with time step $\tau = t_{n+1} - t_n$, for every $n \in \{0,1,\dots,N-1\}$. Moreover, let us denote by $U_n$ the fully discrete solution that approximates $U(t_n)$, and define the source term $F_n = F(t_n)$, for every $n \in \{0,1,\dots,N\}$.
	
	In this framework, we propose two first-order splitting time integrators to solve the partitioned semidiscrete problem \cref{semidisc_prob}. On the one hand, we consider the fractional implicit Euler (FIE) method, described in \cite{Marchuk1990}. It is also known as the locally one-dimensional (LOD) backward Euler method (cf. \cite{HundsdorferVerwer2003}). The scheme is essentially formulated as a Lie operator splitting with integration of the resulting subproblems by the backward Euler method. Thus, it is defined as follows: given $U_0 = \mathcal{P}u_0$, for $n \in \{0,1,\dots,N-1\}$,
	\begin{equation} \label{FIE_method}
		\left\{ \begin{array}{l l}
			(I - \tau A_1)U_{n,1} = U_n + \tau F_{n+1}, \\
			(I - \tau A_j)U_{n,j} = U_{n,j-1}, & j \in \{2,\dots,M\}, \\
			U_{n+1} = U_{n,M}.
		\end{array} \right.
	\end{equation}
	The scheme is $L$-stable and first-order convergent (for more details, see \cite{Samarskii1963}). The stability function of the scheme for $M$ splitting terms is defined as
	\begin{equation} \label{stab_func_FIE}
		R_{\text{FIE}}(z_1,\dots,z_M) = \prod_{j=1}^M \frac{1}{1 - z_j},
	\end{equation}
	where $(z_1,\dots,z_M) \in \mathbb{C}^M$.
	
	On the other hand, we propose a slightly modified time-splitting integrator, the Douglas-Rachford (DR) method. First introduced in \cite{DouglasRachford1956} for $M = 2$ and $M = 3$ splitting terms, it was further generalized for an arbitrary number of terms in \cite{DouglasGunn1962} and \cite{DouglasGunn1964}. The scheme is described as follows: given $U_0 = \mathcal{P}u_0$, for $n \in \{0,1,\dots, N-1\}$,
	\begin{equation} \label{DR_method}
		\left\{ \begin{array}{l l}
			U_{n,0} = (I+\tau A)U_n + \tau F_{n+1}, \\
			(I - \tau A_j)U_{n,j} = U_{n,j-1} - \tau A_j U_n, & j \in \{1,2,\dots,M\}, \\
			U_{n+1} = U_{n,M}.
		\end{array} \right.
	\end{equation}
	The method is first-order convergent (cf. \cite{LionsMercier1979}), but, unlike the FIE scheme, it is not $L$-stable. At least, it retains the $A$-stability property for $M=2$, although only $A_0$-stability can be guaranteed for an arbitrary $M$ (see \cite{Hundsdorfer1999} for a detailed proof). Its stability function is defined as
	\begin{equation} \label{stab_func_DR}
		R_{\text{DR}}(z_1,\dots,z_M) = 1 + \left(\prod_{j=1}^M \frac{1}{1 - z_j}\right)\sum_{j=1}^M z_j.
	\end{equation}
	
	Both proposed time-splitting integrators approximate the backward Euler me-thod, although they differ in their order of splitting error. As discussed in \cite{HansenOstermannSchratz2016}, whilst the FIE scheme has first-order splitting error, the DR method shows second-order splitting error. Thus, more accurate approximations are computed by the latter, at the cost of losing the $L$-stability property.
	
	The numerical integration of parabolic problems involves solving a great number of large linear systems. Accordingly, the proposed partitioning strategies enable us to implement parallelization due to the uncoupled systems obtained for each internal stage of the resulting discrete problem. If dimensional splitting is considered, essentially one-dimensional uncoupled systems are obtained. For domain decomposition splitting, since the functions $\rho_j$ that form the partition of unity vanish outside subdomain $\Omega_j$, each internal stage consists of $q$ independent subsystems, one for each connected component.
	
	Remarkably, dimensional splitting offers stronger parallelization power, although all the data needs to be communicated among all the processors after solving each internal stage. Domain decomposition splitting, despite being more restrictive in the maximum number of processors permited, has a more efficient communication procedure, since only the data located in the overlapping regions needs to be communicated between not more than two processors.
	
	\section{The parareal algorithm for a partitioned problem}
	\label{sec:par}
	
	This section contains a brief summary of the main aspects of the parallel-in-time parareal algorithm, which will be used as the basis for our proposed solvers. This method was first introduced in \cite{LionsMadayTurinici2001}, and equivalent reformulations of the algorithm have subsequently been proposed in \cite{BafficoBernardMadayTuriniciZerah2002} and \cite{BalMaday2002}. This rather simple but efficient parallel-in-time scheme has been widely analyzed throughout literature, with promising results for parabolic problems (cf. \cite{GanderVandewalle2007}).
	
	Let us first consider a coarse uniform partition of the time interval $[0,T]$ into $N_c$ subintervals of the form $[T_{n},T_{n+1}]$, for $n \in \{0,1,\dots,N_c-1\}$, such that $\Delta T = T_{n+1} - T_n = T/N_c$ denotes the coarse step size. On the other hand, let us uniformly divide each of the intervals $[T_n,T_{n+1}]$ into $s$ smaller subintervals denoted by $[t_{ns+j},t_{ns+j+1}]$, for $j \in \{0,1,\dots,s-1\}$, where $\delta t = \Delta T/s = T/(sN_c)$ is the corresponding fine step size, such that $t_{ns} = T_n$ and $t_{(n+1)s} = T_{n+1}$.
	
	The algorithm considers two integrators, namely, the coarse and fine propagators. The former is inexpensive and inaccurate, whereas the latter is expensive but accurate. Moreover, we consider the coarse and fine partitions in time for the corresponding coarse and fine propagators, respectively. The key idea of the algorithm is to parallelize the integration of the problem by the fine propagator on each interval $[T_n,T_{n+1}]$. 
	
	Let $\mathcal{G}_{\Delta T}(T_n,T_{n+1},v)$ denote the approximation at $T_{n+1}$ obtained after a single step $\Delta T$ of the coarse propagator for a given problem, with initial value $v$ at $T_n$. Analogously, let $\mathcal{F}_{\delta t}(t_{ns+j},t_{ns+j+1},v)$ be the approximation after a step $\delta t$ computed by the fine propagator at $t_{ns+j+1}$, with initial value $v$ at $t_{ns+j}$.
	
	The algorithm usually considers the initial guess $U_0^0 = \mathcal{P}u_0$ and
	\begin{displaymath}
		U_{n+1}^0 = \mathcal{G}_{\Delta T}(T_n,T_{n+1},U_n^0), \quad n \in \{0,1,\dots,N_c-1\}.
	\end{displaymath}
	For $k \in \{0,1,2,\dots\}$, the parareal algorithm is then defined as follows:
	\begin{displaymath}
		\arraycolsep=1.4pt
		\begin{array}{ll}
			U_0^{k+1} &= \mathcal{P}u_0, \\[0.5ex]
			U_{n+1}^{k+1} &= \mathcal{G}_{\Delta T}(T_n,T_{n+1},U_n^{k+1}) + \mathcal{F}_{\delta t}^s(T_n,T_{n+1},U_n^k) - \mathcal{G}_{\Delta T}(T_n,T_{n+1},U_n^k),
		\end{array}
	\end{displaymath}
	for $n \in \{0,1,\dots,N_c-1\}$. In the previous expression, we use the compact notation $\mathcal{F}_{\delta t}^s(T_n,T_{n+1},U_n^k)$ for the approximation $\tilde{U}_{(n+1)s}$, resulting from the recurrence relation
	\begin{displaymath}
		\tilde{U}_{ns+j+1} = \mathcal{F}_{\delta t}(t_{ns+j},t_{ns+j+1},\tilde{U}_{ns+j}), \quad j \in \{0,1,\dots,s-1\},
	\end{displaymath}
	with $\tilde{U}_{ns} = U_n^k$. Notice that $\mathcal{F}_{\delta t}^s(T_n,T_{n+1},U_n^k)$ can be computed in parallel for every $n \in \{0,1,\dots,N_c-1\}$, since the values $U_n^k$ are known from the previous iteration. Therefore, up to $N_c$ processors can be devoted to the parallel implementation of the algorithm.
	
	Among the different interpretations of the parareal algorithm, we highlight that it is described as a predictor-corrector method in \cite{StaffRonquist2005}, whereas it was proven to belong to the family of multiple shooting methods (see \cite{GanderVandewalle2007} for further details). Finally, from a multigrid point of view, it can be seen as a two-level multigrid in time method (cf. \cite{FalgoutFriedhoffKolevMaclachlanSchroder2014}).
	
	In the sequel, the fine approximation stands for the approximation computed sequentially by the fine propagator with step size $\delta t$. Even though the method is iterative, it converges to the fine approximation in a finite number of iterations (cf. \cite{GanderKwokZhang2018}, Theorem 6). In fact, $N_c$ iterations are required at most to obtain the fine approximation. However, in order to be competitive with sequential computing, the algorithm should provide a sufficiently accurate approximation in only a few iterations.
	
	Regarding stability, based on a comprehensive analysis performed by Friedhoff and Southworth in \cite{FriedhoffSouthworth2021}, it is concluded that $L$-stability is a key property for the coarse propagator so as to achieve convergence of the parareal algorithm. This conclusion is consistent with Theorem 2 in \cite{StaffRonquist2005}.
	
	In order to analyze the convergence properties of the parareal algorithm for a partitioned problem, let us adapt the definition of the convergence factor, proposed in \cite{GanderVandewalle2007}. We then consider the partitioned Dahlquist test problem
	\begin{equation} \label{part_Dahlquist}
		\left\{ \begin{array}{l l}
			y'(t) = (\lambda_1 + \cdots + \lambda_M)y(t), & t \in (0,T], \\
			y(0) = y_0,
		\end{array} \right.
	\end{equation}
	for $M$ splitting terms and with $\lambda_i \in \mathbb{C}$, for $i \in \{1,\dots,M\}$ (see Chapter IV in \cite{HundsdorferVerwer2003}). This generalization is especially useful in our framework, on the grounds that the elliptic operator is partitioned following the splitting techniques described in \Cref{sec:split}.
	
	In this setting, the convergence factor of the parareal algorithm is defined as
	\begin{equation} \label{conv_factor}
		\mathcal{K}(z_1,\dots,z_M,s) = \frac{|R_{\mathcal{F}}(z_1/s,\dots,z_M/s)^s - R_{\mathcal{G}}(z_1,\dots,z_M)|}{1 - |R_{\mathcal{G}}(z_1,\dots,z_M)|},
	\end{equation}
	where $R_{\mathcal{G}}$ and $R_{\mathcal{F}}$ are the stability functions of the coarse and fine propagators, respectively, and $s = \Delta T/\delta t$. Note that for $M = 1$, the convergence factor \cref{conv_factor} is equivalent to the formulation presented in \cite{GanderVandewalle2007}. Let us denote by $y_n^k$ the approximation of the parareal algorithm for problem \cref{part_Dahlquist} at $t = T_n$ and $k$-th iteration. Similarly, let $y_n^F$ denote the fine approximation for problem \cref{part_Dahlquist} at $t = T_n$. Then, the following inequality is fulfilled for every $k \in \{0,1,2,\dots\}$ (cf. \cite{HundsdorferVerwer2003})
	\begin{displaymath}
		\sup_{n>0} |y_n^F - y_n^k| \leq \left(\mathcal{K}(\lambda_1\Delta T, \dots, \lambda_M\Delta T, s)\right)^k \sup_{n>0} |y_n^F - y_n^0|.
	\end{displaymath}
	Therefore, if the bound $\mathcal{K}(\lambda_1\Delta T, \dots, \lambda_M\Delta T, s) < 1$ holds, the parareal algorithm converges to the fine approximation at each iteration. Remarkably, more demanding bounds on the convergence factor are considered in literature, as we further detail in the following section (cf. \cite{Wu2015}).
	
	\begin{remark}
		The previous results can be generalized for matrix differential problems of the form
		\begin{displaymath}
			\left\{ \begin{array}{l l}
				y'(t) = (A_1 + \cdots + A_M)y(t), & t \in (0,T], \\
				y(0) = y_0,
			\end{array} \right.
		\end{displaymath}
		with $A_i \in \mathbb{R}^{m \times m}$, for $i \in \{1,\dots,M\}$, if the set $\{A_1,\dots,A_M\}$ is simultaneously diagonalizable. Thus, assuming that there exists an invertible matrix $V \in \mathbb{R}^{m \times m}$ such that $V^{-1}A_iV$ is a diagonal matrix, for $i \in \{1,\dots,M\}$, the following inequality holds
		\begin{displaymath}
			\sup_{n>0} \|Ve_n^k\|_{\infty} \leq \left(\max_{\substack{\lambda_i \in \sigma(A_i) \\ i \in \{1,\dots,M\}}} \mathcal{K}(\lambda_1\Delta T, \dots, \lambda_M\Delta T, s)\right)^k \sup_{n>0} \|Ve_n^0\|_{\infty},
		\end{displaymath}
		where $e_n^k = y_n^F - y_n^k$, and $\sigma(A_i)$ stands for the set of eigenvalues of $A_i$, for $i \in \{1,\dots,M\}$. The imposed conditions are really demanding, and one cannot apply such theoretical results directly, for instance, for domain decomposition splittings. However, as stated in \cite{BuvoliMinion2024}, the qualitative behaviour of the method and its theoretical convergence analysis are closely related in practice. Numerical experiments performed in \Cref{sec:num} also validate this claim. Therefore, we will consider only the scalar problem \cref{part_Dahlquist} for the convergence analysis of the methods.
	\end{remark}
	
	\section{Formulation and convergence analysis of the solvers}
	\label{sec:conv}
	
	In this section, we introduce the proposed space-time parallel solvers, together with their main theoretical results, which are stated and proven. The major contribution of this work is to consider splitting time integrators as the propagators of the parareal algorithm, resulting in methods that permit parallelization in both space and time. It is particularly noteworthy that, unlike the classical parareal algorithm, this version permits parallelization of the coarse propagator. This is a key advantage, since the use of all the available processors is optimized.
	
	As mentioned in the previous section, the coarse propagator is required to be $L$-stable. We hence select the FIE method \cref{FIE_method} for that purpose. On the other hand, both the aforementioned scheme \cref{FIE_method} and the DR method \cref{DR_method} are separately considered as the fine propagators. Therefore, we present the following space-time parallel methods:
	\begin{itemize}
		\item the Parareal/FIE-FIE method: the parareal algorithm with the FIE method \cref{FIE_method} as both propagators; and
		\item the Parareal/FIE-DR method: the parareal algorithm with the FIE method \cref{FIE_method} as the coarse propagator and the DR method \cref{DR_method} as the fine propagator.
	\end{itemize}
	
	\subsection{Convergence analysis of the Parareal/FIE-FIE method}
	
	We first show that the convergence factor \cref{conv_factor} of the Parareal/FIE-FIE method is bounded by $1/3$ if $z_i \in \mathbb{R}^-$, for every $i \in \{1,\dots,M\}$. This more demanding condition is usually required in order to ensure a faster convergence of the parareal iterative procedure (see, e.g., \cite{Wu2015}, \cite{Wu2016}, and \cite{WuZhou2015}). We first prove the bound for two splitting terms, and, subsequently, we generalize the result for $M$ arbitrary terms.
	
	\begin{theorem} \label{thm:FIE-FIE}
		Let $\mathcal{K}_{\text{FIE-FIE}}(z_1,z_2,s)$ be the convergence factor \cref{conv_factor} of the Para-real/FIE-FIE method for $M = 2$. Then, the bound $\mathcal{K}_{\text{FIE-FIE}}(z_1,z_2,s) \leq 1/3$ holds for every $(z_1,z_2) \in (-\infty,0)^2$ and $s \geq 1$.
	\end{theorem}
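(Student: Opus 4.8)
The plan is to reduce the claim to an elementary one‑variable inequality by exploiting the explicit product form \cref{stab_func_FIE} of the FIE stability function. Write $w_i = -z_i > 0$, so that $R_{\text{FIE}}(z_1,z_2) = \bigl((1+w_1)(1+w_2)\bigr)^{-1}\in(0,1)$ and $R_{\text{FIE}}(z_1/s,z_2/s)^s = \bigl((1+w_1/s)^s(1+w_2/s)^s\bigr)^{-1}$. First I would sandwich the fine factor between two classical bounds, both valid for $s\ge 1$ and $w_i>0$: Bernoulli's inequality $(1+w_i/s)^s\ge 1+w_i$ gives $R_{\text{FIE}}(z_1/s,z_2/s)^s\le R_{\text{FIE}}(z_1,z_2)$, which lets one drop the absolute value in the numerator of \cref{conv_factor}; and $1+x\le e^{x}$ applied to $x=w_i/s$ gives $(1+w_i/s)^s\le e^{w_i}$, hence $R_{\text{FIE}}(z_1/s,z_2/s)^s\ge e^{-(w_1+w_2)}$. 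Since $1-R_{\text{FIE}}(z_1,z_2)>0$, these combine to
\[
\mathcal{K}_{\text{FIE-FIE}}(z_1,z_2,s) = \frac{R_{\text{FIE}}(z_1,z_2) - R_{\text{FIE}}(z_1/s,z_2/s)^s}{1 - R_{\text{FIE}}(z_1,z_2)} \le \frac{R_{\text{FIE}}(z_1,z_2) - e^{-(w_1+w_2)}}{1 - R_{\text{FIE}}(z_1,z_2)},
\]
and it remains only to show that the right‑hand side, which no longer depends on $s$, is at most $1/3$.

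Clearing denominators and writing $P=(1+w_1)(1+w_2)$, the bound by $1/3$ is equivalent to $P\bigl(1+3e^{-(w_1+w_2)}\bigr)\ge 4$. Because $P = 1+(w_1+w_2)+w_1w_2 \ge 1+(w_1+w_2)$ while $1+3e^{-(w_1+w_2)}>0$, it suffices to prove
\[
g(t) := (1+t)\bigl(1+3e^{-t}\bigr) \ge 4 \qquad \text{for all } t>0,
\]
with $t=w_1+w_2$. This is the crux. One checks $g(0)=4$ and $g'(t)=1-3te^{-t}$; since $3te^{-t}$ reaches $3/e>1$ at $t=1$, the function $g$ is not monotone and in fact decreases on an interval around $t=1$, so the bound cannot simply be read off. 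The key observation is that at any critical point $t_\ast$ of $g$ one has $3t_\ast e^{-t_\ast}=1$, i.e.\ $e^{-t_\ast}=1/(3t_\ast)$, and feeding this back into $g$ eliminates the exponential:
\[
g(t_\ast) = (1+t_\ast)\Bigl(1+\tfrac{1}{t_\ast}\Bigr) = \frac{(1+t_\ast)^2}{t_\ast} = t_\ast + 2 + \frac{1}{t_\ast} \ge 4,
\]
by AM--GM. A sign analysis of $g'$ shows that $g$ has exactly one local minimum on $(0,\infty)$, occurring at such a $t_\ast$; combined with $g(0)=4$ and $g(t)\to\infty$ as $t\to\infty$, this gives $g(t)\ge 4$ throughout $(0,\infty)$, and unwinding the chain of inequalities yields $\mathcal{K}_{\text{FIE-FIE}}(z_1,z_2,s)\le 1/3$.

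The main obstacle is precisely this last step: the reduced inequality $g\ge 4$ does not follow from monotonicity, and a direct attack would need numerical estimates near $t=1$. Exploiting the critical‑point relation $3t_\ast e^{-t_\ast}=1$ to turn $g(t_\ast)$ into the rational expression $t_\ast+2+1/t_\ast$ is what makes it elementary; everything else — the Bernoulli and exponential bounds, the algebra reducing to $g$, and the estimate $P\ge 1+(w_1+w_2)$ — is routine. The same scheme, applied with the appropriate product‑form stability functions, is what I would carry out for the $M$‑term generalization.
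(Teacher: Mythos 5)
Your proof is correct, and up to the reduction it mirrors the paper's: both arguments split the bound into the two one-sided conditions \cref{FIE_cond1} and \cref{FIE_cond2}, dispose of \cref{FIE_cond1} via $(R_{\text{FIE}}(z_1/s,z_2/s))^s \leq R_{\text{FIE}}(z_1,z_2)$ (you via Bernoulli, the paper via monotonicity in $s$), and reduce \cref{FIE_cond2} to the $s$-independent inequality $e^{z_1+z_2}\geq -\tfrac13+\tfrac{4}{3}R_{\text{FIE}}(z_1,z_2)$, i.e.\ to the paper's \cref{FIE_cond4}. Where you genuinely diverge is in proving that inequality. The paper keeps it as a two-variable problem, restricts to the box $[-3,0]^2$ (outside of which the right-hand side of \cref{FIE_cond3} is negative), and runs a full extremum analysis of $H(z_1,z_2)$ over that compact set, locating boundary critical points through the transcendental equation $3ze^z+1=0$. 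You instead discard the cross term via $(1+w_1)(1+w_2)\geq 1+w_1+w_2$ and reduce to the single-variable claim $g(t)=(1+t)(1+3e^{-t})\geq 4$ on $t>0$, which you settle by substituting the critical-point relation $3t_\ast e^{-t_\ast}=1$ into $g$ to get $t_\ast+2+1/t_\ast\geq 4$ by AM--GM (the paper's boundary critical values in fact reduce to the same rational expression, though it does not exploit this). Your route is shorter, needs no restriction to a compact box, and buys something concrete: since $\prod_j(1+w_j)\geq 1+\sum_j w_j$ for any number of terms, the same one-variable inequality $g(t)\geq 4$ with $t=\sum_j w_j$ immediately yields the bound \cref{FIE_cond5} for arbitrary $M$, so the induction argument of the corollary becomes unnecessary. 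The one point to state explicitly when writing it up is the sign pattern of $g'(t)=1-3te^{-t}$ (positive, negative, positive, with the single local minimum at the larger root of $3te^{-t}=1$), which together with $g(0)=4$ and $g(t_\ast)\geq 4$ closes the argument.
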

	\begin{proof}
		Let $R_{\text{FIE}}(z_1,z_2)$ denote the stability function \cref{stab_func_FIE} for $M = 2$. We need to prove the inequality
		\begin{displaymath}
			\mathcal{K}_{\text{FIE-FIE}}(z_1,z_2,s) = \frac{|(R_{\text{FIE}}(z_1/s,z_2/s))^s - R_{\text{FIE}}(z_1,z_2)|}{1 - |R_{\text{FIE}}(z_1,z_2)|} \leq \frac{1}{3},
		\end{displaymath}
		for $(z_1,z_2) \in (-\infty,0)^2$, or equivalently, since $0 < R_{\text{FIE}}(z_1,z_2) < 1$,
		\begin{equation} \label{FIE_cond1}
			(R_{\text{FIE}}(z_1/s,z_2/s))^s \leq \frac{1}{3} + \frac{2}{3}R_{\text{FIE}}(z_1,z_2)
		\end{equation}
		and
		\begin{equation} \label{FIE_cond2}
			(R_{\text{FIE}}(z_1/s,z_2/s))^s \geq -\frac{1}{3} + \frac{4}{3}R_{\text{FIE}}(z_1,z_2).
		\end{equation}
		
		Defining $G(s) = (1-z/s)^{-s}$ for any fixed $z \in (-\infty,0)$, it is straightforward to prove that $G$ is monotonically decreasing. Then, since $G(s) > 0$ for every $s \geq 1$, the inequality $(R_{\text{FIE}}(z_1/{s_1},z_2/{s_1}))^{s_1} > (R_{\text{FIE}}(z_1/{s_2},z_2/{s_2}))^{s_2}$ holds for $s_2 > s_1 \geq 1$.
		
		Given that scheme \cref{FIE_method} is $L$-stable, $R_{\text{FIE}}(z_1,z_2) < 1$ is satisfied for every $(z_1,z_2)$ $ \in (-\infty,0)^2$, and, as a consequence,
		\begin{displaymath}
			R_{\text{FIE}}(z_1,z_2) = \frac{1}{3} R_{\text{FIE}}(z_1,z_2) + \frac{2}{3}R_{\text{FIE}}(z_1,z_2) < \frac{1}{3} + \frac{2}{3} R_{\text{FIE}}(z_1,z_2).
		\end{displaymath}
		Finally, since $(R_{\text{FIE}}(z_1/s,z_2/s))^s \leq R_{\text{FIE}}(z_1,z_2)$ holds for every $s \geq 1$, condition \cref{FIE_cond1} is fulfilled.
		
		On the other hand, since $\displaystyle \lim_{s \rightarrow \infty} (R_{\text{FIE}}(z_1/s,z_2/s))^s = e^{z_1+z_2}$, $(R_{\text{FIE}}(z_1/s,z_2/s))^s > e^{z_1+z_2}$ holds for every $s \geq 1$, and, in order to show condition \cref{FIE_cond2}, it suffices to prove
		\begin{equation} \label{FIE_cond3}
			e^{z_1+z_2} \geq - \frac{1}{3} + \frac{4}{3(1-z_1)(1-z_2)}.
		\end{equation}
		Note that the right-hand side term in \cref{FIE_cond3} is negative for every $(z_1,z_2) \in (-\infty,0)^2\setminus [-3,0)^2$. Thus, we only need to prove \cref{FIE_cond3} in the domain $[-3,0)^2$. Rearranging the expression, it is sufficient to show
		\begin{equation} \label{FIE_cond4}
			H(z_1,z_2) = (1-z_1)(1-z_2)(3e^{z_1+z_2} + 1) \geq 4,
		\end{equation}
		for $(z_1,z_2) \in [-3,0)^2$. A simple analysis of the absolute extrema of $H$ in $[-3,0]^2$ yields that the only candidates to absolute extrema are the points $(0,0)$, $(-3,0)$, $(0,-3)$, $(-3,-3)$, $(z^*,0)$, $(0,z^*)$, $(z^{**},0)$, and $(0,z^{**})$, where $z^*$ and $z^{**}$ are the only two real solutions of the equation $3ze^z + 1 =0$ on the interval $[-3,0]$.
		
		Evaluating function $H$ at these points, one can conclude that
		\begin{displaymath}
			4 = H(0,0) \leq H(z_1,z_2) \leq H(-3,-3) = 16\left(1+3/e^6\right)
		\end{displaymath}
		for every $(z_1,z_2) \in [-3,0]^2$. Therefore, inequality \cref{FIE_cond4} holds and the claim is proven.
	\end{proof}
	
	The previous result can be generalized for $M$ arbitrary splitting terms, as it is shown in the following corollary.
	
	\begin{corollary}
		Let $\mathcal{K}_{\text{FIE-FIE}}(z_1,\dots,z_M,s)$ be the convergence factor \cref{conv_factor} of the Parareal/FIE-FIE method, for $M$ arbitrary splitting terms, with $M \geq 2$. Then, the bound $\mathcal{K}_{\text{FIE-FIE}}(z_1,\dots,z_M,s) \leq 1/3$ is satisfied for every $(z_1,\dots,z_M) \in (-\infty,0)^M$ and $s \geq 1$.
	\end{corollary}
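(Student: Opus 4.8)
The plan is to reuse the two-condition reduction from the proof of \Cref{thm:FIE-FIE}, replacing the compact-box extremal analysis used there (which does not generalize cleanly to $M$ variables) by a one-variable reduction driven by the elementary inequality $\prod_{j=1}^{M}(1-z_j)\geq 1-\sum_{j=1}^{M}z_j$, valid on $(-\infty,0)^{M}$.

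First I would observe that $R_{\text{FIE}}(z_1,\dots,z_M)=\prod_{j=1}^{M}(1-z_j)^{-1}\in(0,1)$ on $(-\infty,0)^{M}$, so that, exactly as for \cref{FIE_cond1} and \cref{FIE_cond2}, the desired bound $\mathcal{K}_{\text{FIE-FIE}}(z_1,\dots,z_M,s)\leq 1/3$ is equivalent to
\begin{displaymath}
  -\frac{1}{3}+\frac{4}{3}R_{\text{FIE}}(z_1,\dots,z_M)\;\leq\;\big(R_{\text{FIE}}(z_1/s,\dots,z_M/s)\big)^{s}\;\leq\;\frac{1}{3}+\frac{2}{3}R_{\text{FIE}}(z_1,\dots,z_M).
\end{displaymath}
The upper inequality carries over verbatim from the theorem: each factor $G_j(s)=(1-z_j/s)^{-s}$ is monotonically decreasing in $s$, hence $\big(R_{\text{FIE}}(z_1/s,\dots,z_M/s)\big)^{s}=\prod_{j=1}^{M}G_j(s)\leq\prod_{j=1}^{M}G_j(1)=R_{\text{FIE}}(z_1,\dots,z_M)<\frac13+\frac23 R_{\text{FIE}}(z_1,\dots,z_M)$, the last step because $R_{\text{FIE}}<1$.

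For the lower inequality, monotonicity of each $G_j$ together with $\lim_{s\to\infty}G_j(s)=e^{z_j}$ yields $\big(R_{\text{FIE}}(z_1/s,\dots,z_M/s)\big)^{s}>e^{z_1+\cdots+z_M}$ for every $s\geq 1$, so it suffices to prove
\begin{displaymath}
  e^{\sigma}\;\geq\;-\frac13+\frac{4}{3P},\qquad\text{where}\quad \sigma:=\sum_{j=1}^{M}z_j\leq 0,\quad P:=\prod_{j=1}^{M}(1-z_j),
\end{displaymath}
i.e., $P\,(3e^{\sigma}+1)\geq 4$. At this point I would invoke $P\geq 1-\sigma$ (expand the product; every cross term is nonnegative because $-z_j>0$) together with $3e^{\sigma}+1>0$ to reduce to the scalar statement $\psi(t):=(1+t)(3e^{-t}+1)\geq 4$ for all $t:=-\sigma\geq 0$. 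This last inequality is elementary: $\psi(0)=4$, $\psi(t)\to\infty$ as $t\to\infty$, and at any critical point $t_{*}\in(0,\infty)$ one has $t_{*}e^{-t_{*}}=1/3$, whence $\psi(t_{*})=(1+t_{*})^{2}/t_{*}\geq 4$ by the arithmetic--geometric mean inequality; therefore $\psi$ attains its minimum value $4$ on $[0,\infty)$, and the lower inequality follows.

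The step I expect to be the main obstacle is, as for $M=2$, the lower inequality: the $M=2$ trick of confining the analysis to a compact region and enumerating candidate extrema is unavailable for arbitrary $M$. The crux is the recognition that the bound $P\geq 1-\sigma$ collapses the $M$-dimensional inequality onto the single scalar parameter $\sigma$, after which only the one-variable inequality $\psi(t)\geq 4$ remains, and this is dispatched by a short calculus argument. Incidentally, the same reduction would streamline the proof of \Cref{thm:FIE-FIE} itself.
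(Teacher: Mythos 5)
Your proof is correct, but it takes a genuinely different route from the paper. The paper proves the key inequality \cref{FIE_cond5} by induction on $M$: it shows that any interior critical point of $H_M$ on $(-3,0)^M$ would require $z_1=\cdots=z_M$ and $-1-3ze^{Mz}=0$, which has no root on $[-3,0]$ for $M\geq 3$, so the extrema lie on the faces of the box $[-3,0]^M$; the faces $z_i=0$ reduce to $H_{M-1}$ (induction hypothesis, anchored at \Cref{thm:FIE-FIE}) and the faces $z_i=-3$ are estimated directly. You instead collapse the $M$-dimensional inequality to one variable via $\prod_{j=1}^{M}(1-z_j)\geq 1-\sum_{j=1}^{M}z_j$ (valid since all cross terms in the expansion are nonnegative for $z_j<0$), reducing everything to $\psi(t)=(1+t)(3e^{-t}+1)\geq 4$ on $t\geq 0$, which you settle by noting $\psi(0)=4$, $\psi(t)\to\infty$, and $\psi(t_*)=(1+t_*)^2/t_*\geq 4$ at any critical point (where $e^{-t_*}=1/(3t_*)$) by AM--GM; I checked these computations and they are sound. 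Your argument is shorter, needs no induction, and dispenses with the preliminary restriction to the compact box $[-3,0]^M$ since $\psi\geq 4$ holds on all of $[0,\infty)$; as you observe, it would also simplify the proof of \Cref{thm:FIE-FIE} itself. What the paper's route buys is a template that it reuses almost verbatim for the Parareal/FIE-DR corollary (inequality \cref{DR_cond4}), though your reduction applies there too, yielding $(1+t)(e^{-t}+1)>2$ for $t>0$ by the same device. The only cosmetic quibble is the phrase claiming the compact-region extremal analysis is ``unavailable'' for arbitrary $M$ --- the paper shows it is available via induction --- but this does not affect the validity of your proof.
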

	\begin{proof}
		Following the proof of \Cref{thm:FIE-FIE} until \cref{FIE_cond4} considering $M$ terms, it yields that the claim is fulfilled if the following bound holds
		\begin{equation} \label{FIE_cond5}
			H_M(z_1,\dots,z_M) =\left( \prod_{j=1}^M (1-z_j)\right) \left(3e^{\sum_{j=1}^M z_j} + 1\right) \geq 4,
		\end{equation}
		for every $(z_1,\dots,z_M) \in [-3,0]^M$.
		
		Let us prove inequality \cref{FIE_cond5} by mathematical induction on $M$. Firstly, the inequality was proven in \Cref{thm:FIE-FIE} for $M = 2$. Furthermore, let us assume that inequality \cref{FIE_cond5} holds for $M = \tilde{M}-1$, with $\tilde{M} \geq 3$, and let us show that it is also satisfied for $M = \tilde{M}$.
		
		Let us find the candidates to relative extrema of $H_{\tilde{M}}$ in the domain $(-3,0)^{\tilde{M}}$. Imposing
		\begin{displaymath}
			\frac{\partial H_{\tilde{M}}}{\partial z_i} = \left(\prod_{\substack{j=1 \\ j \neq i}}^{\tilde{M}} (1-z_j)\right) \left(-1-3z_i e^{\sum_{j=1}^{\tilde{M}} z_j}\right) = 0, 
		\end{displaymath}
		for $i \in \{1,\dots, \tilde{M}\}$, it follows that the inequalities
		\begin{displaymath}
			-1-3z_ie^{\sum_{j=1}^{\tilde{M}} z_j} = 0
		\end{displaymath}
		must be satisfied. Subtracting the expressions pairwise, we obtain the necessary conditions $z_i = z_j$  for every $i,j \in \{1,\dots,\tilde{M}\}$ for the candidates to relative extrema of function $H_{\tilde{M}}$.
		
		Since $-1 -3ze^{\tilde{M}z} = 0$ does not have any solution for $z \in [-3,0]$ and $\tilde{M} \geq 3$, it follows that the only candidates to extrema of $H_{\tilde{M}}$ are located on the boundaries of dimension $\tilde{M}-1$ of the compact set $[-3,0]^{\tilde{M}}$. In particular, on the part of the boundary where $z_i = 0$, $H_{\tilde{M}}|_{z_i=0}$ is essentially the function $H_{\tilde{M}-1}$. Then, applying the induction hypothesis, it follows that $H_{\tilde{M}-1}$ is bounded from below by 4, or equivalently, $H_{\tilde{M}}(z_1,\dots,z_{\tilde{M}}) \geq 4$, for $(z_1,\dots,z_{\tilde{M}}) \in [-3,0]^{\tilde{M}}$ and $z_i = 0$, $i \in \{1,\dots,\tilde{M}\}$.
		
		In addition, on the part of the boundary where $z_i = -3$ (without loss of generality, we consider $z_{\tilde{M}} = -3$), it holds
		\begin{displaymath}
			H_{\tilde{M}}(z_1,\dots,z_{\tilde{M}-1},-3) = 4\left(\prod_{j=1}^{\tilde{M}-1}(1-z_j)\right) \left(3e^{-3}e^{\sum_{j=1}^{\tilde{M}-1}z_j} + 1\right) \geq 4,
		\end{displaymath}
		for $(z_1,\dots,z_{\tilde{M}-1}) \in [-3,0]^{\tilde{M}-1}$, since all the factors in the product accompanying the number 4 are greater than or equal to 1. Therefore, inequality \cref{FIE_cond5} is fulfilled for every $(z_1,\dots,z_M) \in [-3,0]^M$.
	\end{proof}
	
	\Cref{fig:FIE_conv_fact} depicts $\mathcal{K}_{\text{FIE-FIE}}$ over the real negative axis, for two splitting terms and different values of the parameter $s$. No qualitative difference can be observed among the different values of $s$, except for a slight increase of the maximum value. \Cref{fig:FIE_conv_fact} also shows the convergence region of the Parareal/FIE-FIE method in the complex plane. The solver is thus convergent in almost the entire complex left half-plane, except for the region near the imaginary axis.
	
	\begin{figure}[t]
		\centering
		\subfloat[$\mathcal{K}_{\text{FIE-FIE}}$ over the real negative axis]{\label{conv_FIE-FIE}\includegraphics[width=0.465\textwidth]{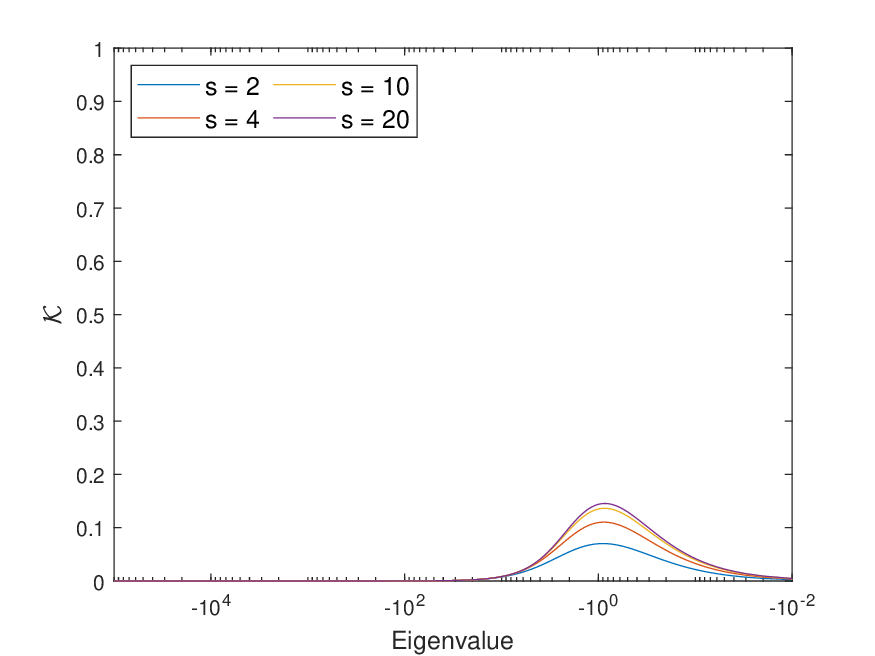}}
		\subfloat[Convergence region in $(-1,0) \times (-20,20)$ ]{\label{FIE-FIE_region}\includegraphics[width=0.465\textwidth]{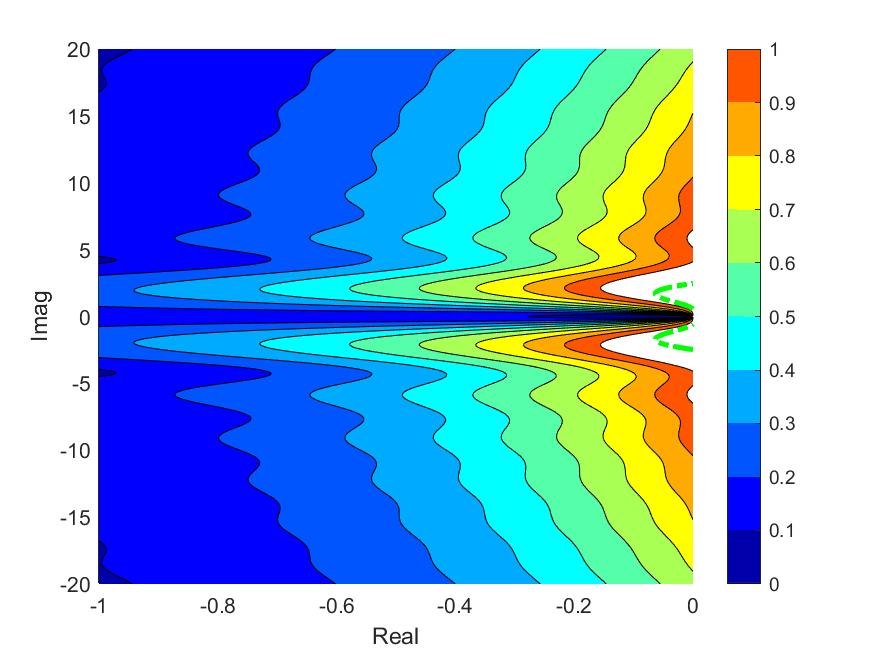}}
		
		\subfloat[Convergence region in $(-2.5\cdot 10^7,0) \times (-10^7,10^7)$]{\label{FIE-FIE_region_huge}\includegraphics[width=0.465\textwidth]{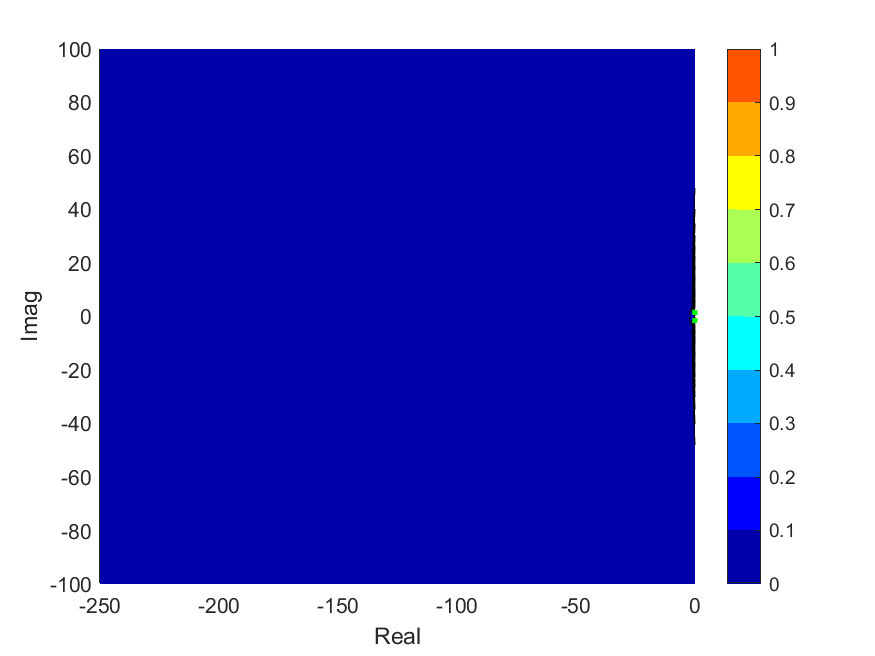}}
		\caption{Convergence factor $\mathcal{K}_{\text{FIE-FIE}}$ over the real negative axis for changing value of $s$, and convergence region over the complex left half-plane in the domains $(-1,0) \times (-20,20)$ and $(-2.5\cdot 10^7,0) \times (-10^7,10^7)$, for $s = 1000$ and for $s = 20$ delimited by a dashed green line. In both cases two equal splitting terms are considered.}
		\label{fig:FIE_conv_fact}
	\end{figure}
	
	\subsection{Convergence analysis of the Parareal/FIE-DR method}
	
	The convergence factor of the Parareal/FIE-DR method can only be bounded by 1 on the real negative axis. In particular, the factor is close to 1 for eigenvalues with large real negative part, as we graphically show at the end of this subsection. This is caused by lack of $L$-stability of the fine propagator.
	
	\begin{theorem} \label{thm:FIE-DR}
		Let $\mathcal{K}_{\text{FIE-DR}}(z_1,z_2,s)$ be the convergence factor \cref{conv_factor} of the Para-real/FIE-DR method, for $M = 2$. Then, the bound $\mathcal{K}_{\text{FIE-DR}}(z_1,z_2,s) < 1$ is fulfilled, for every $(z_1,z_2) \in (-\infty,0)^2$ and $s \geq 1$.
	\end{theorem}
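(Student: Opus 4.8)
The plan is to mirror the structure of the proof of \Cref{thm:FIE-FIE}. Write $R_{\text{FIE}}(z_1,z_2)=\bigl((1-z_1)(1-z_2)\bigr)^{-1}$ and $R_{\text{DR}}(z_1,z_2)=1+\frac{z_1+z_2}{(1-z_1)(1-z_2)}$ for the stability functions \cref{stab_func_FIE}--\cref{stab_func_DR} with $M=2$. For $(z_1,z_2)\in(-\infty,0)^2$ one has $0<R_{\text{FIE}}(z_1,z_2)<1$, so the denominator of \cref{conv_factor} is positive and $|R_{\mathcal G}|=R_{\text{FIE}}$. Setting $a:=\bigl(R_{\text{DR}}(z_1/s,z_2/s)\bigr)^s$ and $b:=R_{\text{FIE}}(z_1,z_2)\in(0,1)$, the bound $\mathcal K_{\text{FIE-DR}}(z_1,z_2,s)<1$ is equivalent to $|a-b|<1-b$, that is, to the two conditions $a<1$ and $a>2b-1$ (relaxed analogues of \cref{FIE_cond1}--\cref{FIE_cond2}).

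I would first dispose of $a<1$. A direct computation gives, for any $w_1,w_2<0$, the identity $R_{\text{DR}}(w_1,w_2)-R_{\text{FIE}}(w_1,w_2)=1-\frac{1-w_1-w_2}{(1-w_1)(1-w_2)}$, whose right-hand side is positive because $w_1w_2>0$; hence $0<R_{\text{FIE}}(w_1,w_2)<R_{\text{DR}}(w_1,w_2)$, while $R_{\text{DR}}(w_1,w_2)<1$ since $w_1+w_2<0$ and $(1-w_1)(1-w_2)>0$. Taking $w_i=z_i/s<0$ shows $0<a<1$ for every $s\ge1$.

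For $a>2b-1$, the idea is to bound $a$ below by $e^{z_1+z_2}$. Since $R_{\text{DR}}(z_1/s,z_2/s)>R_{\text{FIE}}(z_1/s,z_2/s)>0$ by the identity above, raising to the power $s\ge1$ preserves the inequality, so $a>\bigl(R_{\text{FIE}}(z_1/s,z_2/s)\bigr)^s=(1-z_1/s)^{-s}(1-z_2/s)^{-s}$; and by the monotonicity of $s\mapsto(1-z/s)^{-s}$ already used in the proof of \Cref{thm:FIE-FIE}, each factor exceeds $e^{z_i}$, whence $a>e^{z_1+z_2}$. It therefore suffices to prove $e^{z_1+z_2}\ge 2R_{\text{FIE}}(z_1,z_2)-1$, i.e., that
\begin{displaymath}
	H(z_1,z_2):=(1-z_1)(1-z_2)\bigl(e^{z_1+z_2}+1\bigr)\ge 2
\end{displaymath}
for all $(z_1,z_2)\in(-\infty,0]^2$.

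Proving this last inequality is the only real work, and I would do it with a global-minimum argument like the one for \cref{FIE_cond4}. From $\partial_{z_1}H=-(1-z_2)\bigl(1+z_1e^{z_1+z_2}\bigr)$ and the symmetric expression for $\partial_{z_2}H$, an interior critical point in $(-\infty,0)^2$ would require $z_1e^{z_1+z_2}=z_2e^{z_1+z_2}=-1$, forcing $z_1=z_2=:z$ with $z\,e^{2z}=-1$; but $\min_{z<0}z\,e^{2z}=-1/(2e)>-1$, so there are no interior critical points. On the face $z_1=0$ (and, by symmetry, $z_2=0$) the function reduces to $g(z)=(1-z)(1+e^z)$ with $g(0)=2$ and $g'(z)=-1-z\,e^z<0$ for $z\le0$ (since $z\,e^z\ge-1/e$), so $g\ge2$ there; and $H(z_1,z_2)\to\infty$ as $|(z_1,z_2)|\to\infty$ within the quadrant. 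Hence the infimum of $H$ over $(-\infty,0]^2$ is attained on the boundary and equals $2$, which yields $a>e^{z_1+z_2}\ge 2b-1$. Combined with $a<1$, this gives $\mathcal K_{\text{FIE-DR}}(z_1,z_2,s)<1$. I expect the critical-point and boundary bookkeeping for $H$ to be the main obstacle; everything else is a short computation. It is also worth noting that the bound cannot be strengthened to a constant strictly below $1$ — at $(0,0)$ one has $H=2$ with equality, so $a-e^{z_1+z_2}\to0$ along directions approaching the origin — which reflects the lack of $L$-stability of the DR fine propagator, as anticipated in the text preceding the theorem.
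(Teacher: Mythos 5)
Your proof is correct and follows the paper's skeleton: reduce $\mathcal{K}_{\text{FIE-DR}}<1$ to the two inequalities $\bigl(R_{\text{DR}}(z_1/s,z_2/s)\bigr)^s<1$ and $\bigl(R_{\text{DR}}(z_1/s,z_2/s)\bigr)^s>2R_{\text{FIE}}(z_1,z_2)-1$, bound the left-hand side below by $e^{z_1+z_2}$, and reduce the matter to $H(z_1,z_2)=(1-z_1)(1-z_2)\bigl(e^{z_1+z_2}+1\bigr)\geq 2$, which you settle by the same extrema analysis (the paper restricts to $[-1,0)^2$ after noting the right-hand side is negative elsewhere; you work on the whole closed quadrant with a coercivity argument --- both are fine). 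The one genuinely different step is how you get $\bigl(R_{\text{DR}}(z_1/s,z_2/s)\bigr)^s>e^{z_1+z_2}$: the paper asserts that $G(s)=\bigl(R_{\text{DR}}(z_1/s,z_2/s)\bigr)^s$ is monotonically decreasing, stating ``$G'(s)<0$'' without proof, whereas you deduce the bound from the identity $R_{\text{DR}}(w_1,w_2)-R_{\text{FIE}}(w_1,w_2)=w_1w_2/\bigl((1-w_1)(1-w_2)\bigr)>0$ combined with the monotonicity of $(1-z/s)^{-s}$ already proved for \Cref{thm:FIE-FIE}. Your route is arguably cleaner: it avoids an unproven (and not obvious) monotonicity claim for the DR stability function, and the same computation yields $0<R_{\text{DR}}(z_1/s,z_2/s)<1$ explicitly, which disposes of the first inequality and justifies raising to the power $s$. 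The only inaccuracy is your closing aside: the impossibility of a bound strictly below $1$ is not driven by the equality $H(0,0)=2$ --- near the origin $a-b=O(|z|^2)$ while $1-b=O(|z|)$, so $\mathcal{K}_{\text{FIE-DR}}\to 0$ there --- but by the regime $z_1,z_2\to-\infty$, where $R_{\text{DR}}(z_1/s,z_2/s)\to 1$ while $R_{\text{FIE}}(z_1,z_2)\to 0$, i.e., the lack of $L$-stability the paper points to. Since that remark is not part of the proof of the stated bound, it does not affect correctness.
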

	\begin{proof}
		The bound $\mathcal{K}_{\text{FIE-DR}}(z_1,z_2,s) < 1$, for $(z_1,z_2) \in (-\infty,0)^2$ and $s \geq 1$, is equivalent to the inequalities $(R_{\text{DR}}(z_1/s,z_2/s))^s < 1$ and
		\begin{equation} \label{DR_cond1}
			(R_{\text{DR}}(z_1/s,z_2/s))^s > 2R_{\text{FIE}}(z_1,z_2) - 1,
		\end{equation}
		where $R_{\text{FIE}}$ and $R_{\text{DR}}$ are the stability functions \cref{stab_func_FIE} and \cref{stab_func_DR} for $M = 2$, respectively.
		
		The first condition is trivially fulfilled, owing to the $A$-stability of the DR method. Moreover, for fixed $(z_1,z_2) \in (-\infty,0)^2$, the function $G(s) = (R_{\text{DR}}(z_1/s,z_2/s))^s$ is monotonically decreasing, since $G'(s) < 0$, for every $s \geq 1$. Then, in the light of $\displaystyle \lim_{s \rightarrow \infty} (R_{\text{DR}}(z_1/s,z_2/s))^s = e^{z_1+z_2}$, the inequality $(R_{\text{DR}}(z_1/s,z_2/s))^s > e^{z_1+z_2}$ holds for $s \geq 1$, and, in order to show \cref{DR_cond1}, it suffices to prove
		\begin{equation} \label{DR_cond2}
			e^{z_1+z_2} > 2R_{FIE}(z_1,z_2) - 1 = \frac{2}{(1-z_1)(1-z_2)} - 1.
		\end{equation}
		Note that the right-hand side of inequality \cref{DR_cond2} is negative for every $(z_1,z_2) \in (-\infty,0)^2\setminus [-1,0)^2$. Hence, we must show that \cref{DR_cond2} is satisfied in the domain $[-1,0)^2$. Rearranging \cref{DR_cond2}, it follows that it is sufficient to prove that
		\begin{equation} \label{DR_cond3}
			H(z_1,z_2) = (1-z_1)(1-z_2)(e^{z_1+z_2} + 1) > 2,
		\end{equation}
		for every $(z_1,z_2) \in [-1,0)^2$.
		
		By a simple analysis of the absolute extrema of the function $H$ in the domain $[-1,0]^2$, it follows that $(0,0)$, $(-1,0)$, $(0,-1)$, and $(-1,-1)$ are the only candidates to absolute extrema in the compact domain $[-1,0]^2$. Evaluating function $H$ at these points, one can conclude that
		\begin{displaymath}
			2 = H(0,0) < H(z_1,z_2) \leq H(-1,-1) = 4\left(1+1/e^2\right),
		\end{displaymath}
		for every $(z_1,z_2) \in [-1,0)^2$. As a consequence, inequality \cref{DR_cond2} is satisfied.
	\end{proof}
	
	Once again, the result can be extended to the case of considering $M$ arbitrary splitting terms by using mathematical induction, as demonstrated below.
	
	\begin{corollary}
		Let $\mathcal{K}_{\text{FIE-DR}}(z_1,\dots,z_M,s)$ be the convergence factor \cref{conv_factor} of the Parareal/FIE-DR method for $M$ arbitrary splitting terms, with $M \geq 2$. Then, the bound $\mathcal{K}_{\text{FIE-DR}}(z_1,\dots,z_M,s) < 1$ is fulfilled for every $(z_1,\dots,z_M) \in (-\infty,0)^M$ and $s \geq 1$.
	\end{corollary}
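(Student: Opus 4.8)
The plan is to follow the same strategy used for the Parareal/FIE-FIE corollary: first reduce the claim to a real inequality on a box in $\mathbb{R}^M$, and then prove that inequality by induction on $M$, with \Cref{thm:FIE-DR} serving as the base case $M = 2$.

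First I would repeat, with $M$ terms in place of two, the reduction carried out in the proof of \Cref{thm:FIE-DR}. The bound $\mathcal{K}_{\text{FIE-DR}}(z_1,\dots,z_M,s) < 1$ splits into $(R_{\text{DR}}(z_1/s,\dots,z_M/s))^s < 1$, which follows from the $A_0$-stability of the DR method on $(-\infty,0)^M$, together with
\begin{displaymath}
(R_{\text{DR}}(z_1/s,\dots,z_M/s))^s > 2R_{\text{FIE}}(z_1,\dots,z_M) - 1.
\end{displaymath}
Since $s \mapsto (R_{\text{DR}}(z_1/s,\dots,z_M/s))^s$ is monotonically decreasing with limit $e^{z_1+\cdots+z_M}$ as $s\to\infty$ — a fact I would verify exactly as in the $M=2$ case — and since the right-hand side above is negative outside $[-1,0)^M$, the whole statement reduces to the analogue of \cref{DR_cond3}, namely
\begin{displaymath}
H_M(z_1,\dots,z_M) = \left(\prod_{j=1}^M (1-z_j)\right)\left(e^{\sum_{j=1}^M z_j} + 1\right) \geq 2,
\end{displaymath}
for every $(z_1,\dots,z_M) \in [-1,0]^M$ (with equality only at the origin, which is excluded from the domain of interest).

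Then I would prove $H_M \geq 2$ by induction on $M$, the case $M = 2$ being \Cref{thm:FIE-DR}. For the inductive step I would compute
\begin{displaymath}
\frac{\partial H_M}{\partial z_i} = \left(\prod_{\substack{j=1 \\ j\neq i}}^M (1-z_j)\right)\left(-1 - z_i e^{\sum_{j=1}^M z_j}\right),
\end{displaymath}
so a critical point in the open box $(-1,0)^M$ would have to satisfy $-1 - z_i e^{\sum_j z_j} = 0$ for every $i$; subtracting these relations pairwise forces $z_1 = \cdots = z_M =: z$ with $-z e^{Mz} = 1$. Since $\max_{z\in[-1,0]}(-z e^{Mz}) = 1/(Me) < 1$ for $M \geq 3$, there is no interior critical point, and hence the minimum of $H_M$ over the compact box is attained on its boundary. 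On a face $z_i = 0$ one has $H_M|_{z_i=0} = H_{M-1}$ in the remaining variables, which is $\geq 2$ by the induction hypothesis; on a face $z_i = -1$, pulling out the factor $1-(-1) = 2$ leaves a product of factors $1 - z_j \geq 1$ times $e^{-1}e^{\sum_{j\neq i}z_j} + 1 \geq 1$, hence again $\geq 2$. This covers the entire boundary and closes the induction.

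The main obstacle is the same as in the Parareal/FIE-FIE corollary: ruling out interior critical points, i.e., showing that the transcendental equation $-z e^{Mz} = 1$ has no root in $[-1,0]$ when $M \geq 3$, the elementary estimate being that $z \mapsto -z e^{Mz}$ attains its maximum $1/(Me)$ at $z = -1/M$. Once that obstruction is removed, the reduction to $H_M$ and the boundary estimates are routine.
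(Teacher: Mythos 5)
Your proposal is correct and follows essentially the same route as the paper's own proof: the same reduction to the inequality $H_M \geq 2$ on the box, the same induction on $M$ with \Cref{thm:FIE-DR} as base case, the same critical-point and boundary analysis. Your only addition is the explicit elementary estimate $\max_{z\in[-1,0]}(-ze^{Mz}) = 1/(Me) < 1$ ruling out interior critical points, a detail the paper asserts without justification.
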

	\begin{proof}
		Considering $M$ terms and following the proof of \Cref{thm:FIE-DR} until \cref{DR_cond3}, it yields that the claim holds if the following inequality is satisfied
		\begin{equation} \label{DR_cond4}
			H_M(z_1,\dots,z_M) = \left(\prod_{j=1}^M (1-z_j) \right)\left(e^{\sum_{j=1}^M z_j} + 1 \right) > 2,
		\end{equation}
		for every $(z_1,\dots,z_M) \in [-1,0)^M$. Let us prove inequality \cref{DR_cond4} by mathematical induction on $M$. First, by \Cref{thm:FIE-DR}, we conclude that, for $M=2$, \cref{DR_cond4} holds. 
		
		Moreover, let us assume that inequality \cref{DR_cond4} is satisified for $M = \tilde{M} - 1$, with $\tilde{M} \geq 3$. Now, considering $M = \tilde{M}$, we look for the points $(z_1,\ldots,z_{\tilde{M}})\in (0,1)^{\tilde{M}}$ that satisfy
		\begin{equation} \label{DR_cond5}
			\frac{\partial H_{\tilde{M}}}{\partial z_i} = -\prod_{\substack{j=1 \\ j \neq i}}^{\tilde{M}} (1-z_j) \left(1 + z_ie^{\sum_{j=1}^{\tilde{M}} z_j}\right) = 0
		\end{equation}
		for every $i \in \{1,\dots,\tilde{M}\}$. Equations (\ref{DR_cond5}) are equivalent to
		\begin{displaymath}
			1 + z_i e^{\sum_{j=1}^{\tilde{M}} z_j} = 0,
		\end{displaymath}
		for $i \in \{1,\dots,\tilde{M}\}$. Subtracting pairwise the previous equalities, it follows that $z_i = z_j$, for every $i,j \in \{1,\dots,\tilde{M}\}$. Since $1 + ze^{\tilde{M}z} = 0$ does not have any solution for $z \in [-1,0]$ and $M \geq 3$, we conclude that no point in the interior of $[-1,0)^{\tilde{M}}$ satisfies \cref{DR_cond5}, and, thus, there are not candidate to extrema in the interior of such domain.
		
		Furthermore, let us look for candidates to absolute extrema on the boundary of the domain $[-1,0]^{\tilde{M}}$. There are two types of hyper-surfaces which form that boundary, i.e., $z_i = 0$ or $z_i = -1$, for some $i \in \{1,\dots,\tilde{M}\}$. First, if $z_i = 0$, $H_{\tilde{M}}|_{z_i = 0}$ is essentially the function $H_{\tilde{M}-1}$, and by the induction hypothesis, $H_{\tilde{M}-1}(z_1,\dots,z_{\tilde{M}-1}) > 2$, for every $(z_1,\dots,z_{\tilde{M}-1}) \in [-1,0)^{\tilde{M}-1}$. Hence, $H_{\tilde{M}}|_{z_i=0}$ is bounded from below by 2.
		
		On the other hand, if $z_i = -1$ (without loss of generality, we consider $z_{\tilde{M}} = -1$),
		\begin{displaymath}
			H_{\tilde{M}}(z_1,\dots,z_{\tilde{M}-1},-1) = 2\prod_{j=1}^{\tilde{M}-1} (1-z_j) \left(e^{-1}e^{\sum_{j=1}^{\tilde{M}-1} z_j} + 1\right) > 2
		\end{displaymath}
		holds trivially, for $(z_1,\dots,z_{\tilde{M}-1}) \in [-1,0)^{\tilde{M}-1}$. Thus, \cref{DR_cond4} holds in $[-1,0)^{\tilde{M}}$ and the claim is proven.
	\end{proof}
	
	\Cref{fig:DR_conv_fact} shows the convergence factor $\mathcal{K}_{\text{FIE-DR}}$ for two equal splitting terms on the real negative axis and in the complex plane, for different values of $s$. Notice that, for large real negative eigenevalues, $\mathcal{K}_{\text{FIE-DR}}$ is close to 1, which may imply a deterioration in terms of convergence. Remarkably, for greater values of $s$, the interval where $\mathcal{K}_{\text{FIE-DR}}$ is close to zero increases. Thus, a sufficiently large value of $s$ is highly recommended in order to avoid deterioration of convergence, as we illustrate in the following section with numerical experiments. Then, the eigenvalues of the problem are embedded in the interval where $\mathcal{K}_{\text{FIE-DR}}$ is close to zero. As for the Parareal/FIE-FIE method, eigenvalues near the imaginary axis may suffer from lack of convergence.
	
	\begin{figure}[t]
		\centering
		\subfloat[$\mathcal{K}_{\text{FIE-DR}}$ over the real negative axis]{\label{conv_FIE-DR}\includegraphics[width=0.465\textwidth]{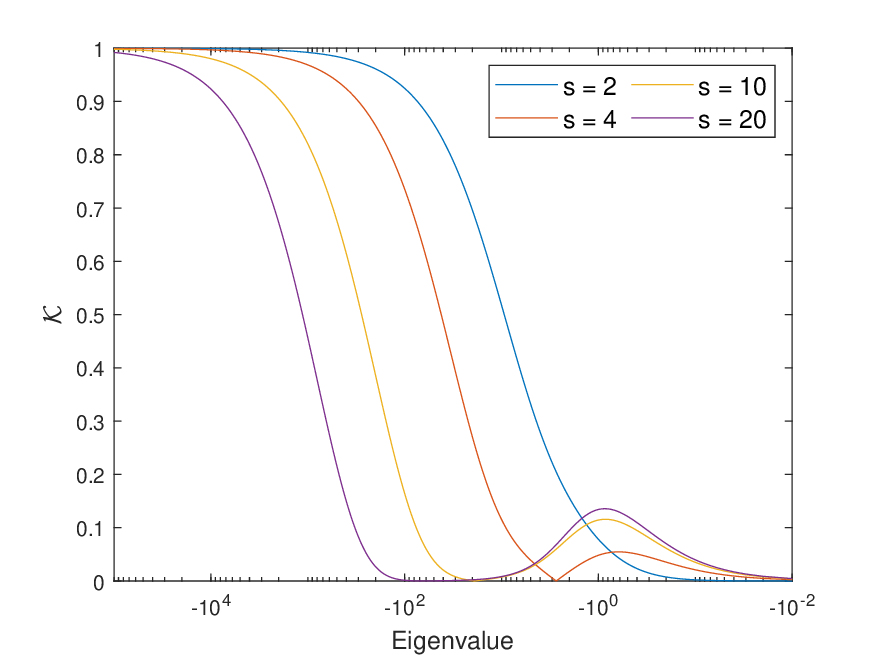}}
		\subfloat[Convergence region in $(-1,0) \times (-20,20)$]{\label{FIE-DR_region}\includegraphics[width=0.465\textwidth]{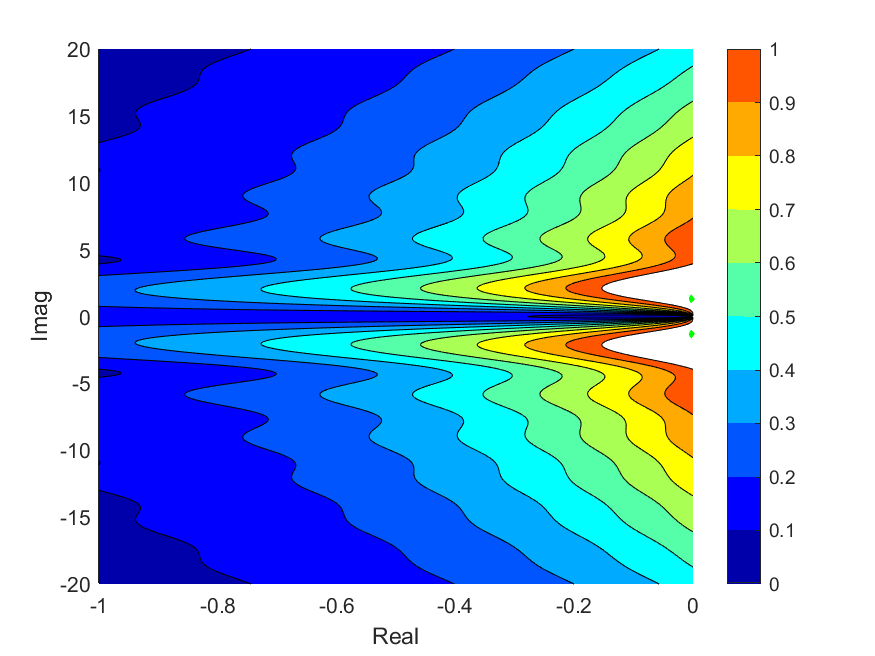}}
		
		\subfloat[Convergence region in $(-2.5\cdot 10^7,0) \times (-10^7,10^7)$]{\label{FIE-DR_region_huge}\includegraphics[width=0.465\textwidth]{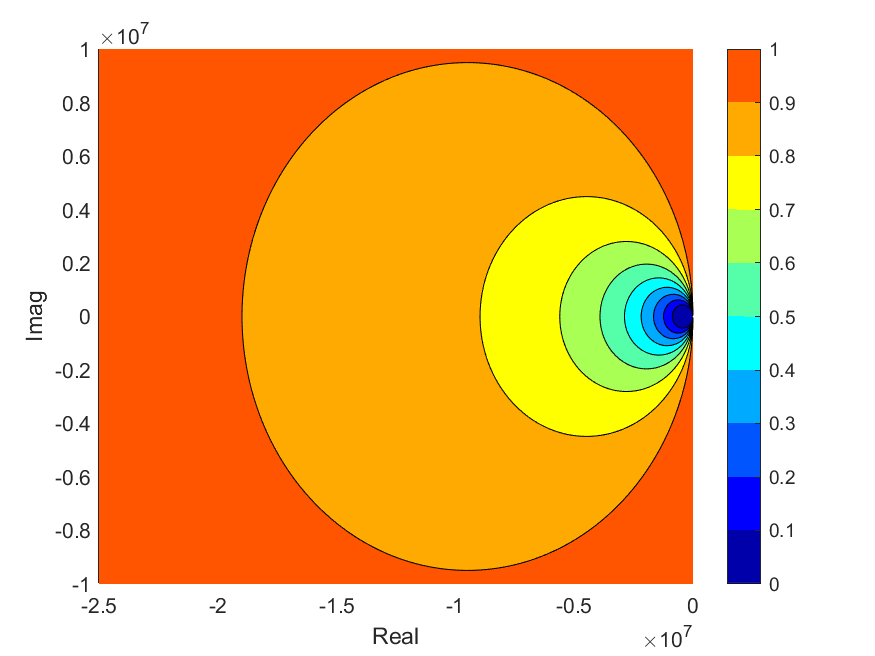}}
		\caption{Convergence factor $\mathcal{K}_{\text{FIE-DR}}$ over the real negative axis for changing value of $s$, and convergence region over the complex left half-plane in the domains $(-1,0) \times (-20,20)$ and $(-2.5\cdot 10^7,0) \times (-10^7,10^7)$, for $s = 1000$ and for $s = 20$ delimited by a dashed green line. In both cases two equal splitting terms are considered.}
		\label{fig:DR_conv_fact}
	\end{figure}
	
	\section{Numerical results}
	\label{sec:num}
	
	The current section is devoted to perform certain numerical test simulations that illustrate the theoretical analysis developed for the newly proposed space-time parallel solvers in \Cref{sec:conv}. For that purpose, we consider a rather academic problem, the heat equation with Dirichlet homogeneous boundary conditions
	\begin{equation} \label{heat_prob}
		\left\{ \begin{array}{l l}
			u_t = \nabla \cdot (D \nabla u) + f, & \text{in } \Omega \times (0,T], \\
			u = 0, & \text{on } \Gamma \times (0,T], \\
			u = 0, & \text{in } \overline{\Omega} \times \{0\},
		\end{array} \right.
	\end{equation}
	posed in the spatial domain $\Omega = (0,1)\times (0,1)$, with diffusion tensor
	\begin{displaymath}
		D = \begin{bmatrix} d_{11}(\mathbf{x}) & d_{12}(\mathbf{x}) \\ d_{12}(\mathbf{x}) & d_{22}(\mathbf{x}) \end{bmatrix},
	\end{displaymath}
	and source term $f = f(\mathbf{x},t)$. A suitable $f$ is chosen such that 
	\begin{displaymath}
		u(\mathbf{x},t) = u(x,y,t) = \sin(2\pi t)\sin(2\pi x) \sin(2\pi y)
	\end{displaymath}
	is the exact solution of problem \cref{heat_prob}. We consider second-order finite differences on a regular meshgrid for the spatial discretization, denoting the mesh size parameter by $h$. Moreover, we set $T = 1$, and the errors are computed using the maximum norm in time and the discrete 2-norm in space.
	
	In order to reliably illustrate the performance of the solvers, two different situations are considered. On the one hand, we consider a diagonal and constant diffusion tensor $D$, that is, $d_{12} = 0$, and $d_{11}$ and $d_{22}$ independent of $\mathbf{x} \in \Omega$. On the other hand, a more challenging problem is considered, where $D$ is a full tensor matrix. In this case, mixed partial derivatives appear in the elliptic operator and, as a consequence, only domain decomposition splittings can be directly implemented (see \Cref{susec:tyspl}).
	
	\subsection{Experiments with a constant diagonal diffusion tensor}
	
	In this subsection, let us consider $d_{11} = 1 = d_{22}$ and $d_{12} = 0$. In the sequel, unless the opposite is mentioned, we set the discretization parameters $h = 10^{-3}$, $\Delta T = 5\cdot 10^{-2}$, and $\delta t = 2.5\cdot 10^{-3}$, and we define a domain decomposition splitting formed by two subdomains, based on \Cref{fig:domaindecomposition}, and partition of unity \cref{partition_unity_M2} for both propagators. Two vertical strips and overlapping size $\beta = 1/16$ are considered for such decomposition.
	
	Let us denote the parareal algorithm with the implicit Euler as both propagators by the Parareal/IE-IE method. Then, \Cref{fig:error_wrt_fine} shows the evolution of the error of the approximations computed by the presented methods with respect to the fine approximation, with increasing number of iterations. We can conclude that, depending on the partitioning strategy, the slope of the error changes. Domain decomposition splitting performs slightly slower than the Parareal/IE-IE method in terms of convergence. On the other hand, dimensional splitting even improves the convergence behaviour of the unsplit method, reducing the number of iterations and cost per iteration at the same time. The two space-time parallel methods perform similarly, according to \Cref{fig:error_wrt_fine}.
	
	\begin{figure}[t]
		\centering
		\includegraphics[width=0.5\textwidth]{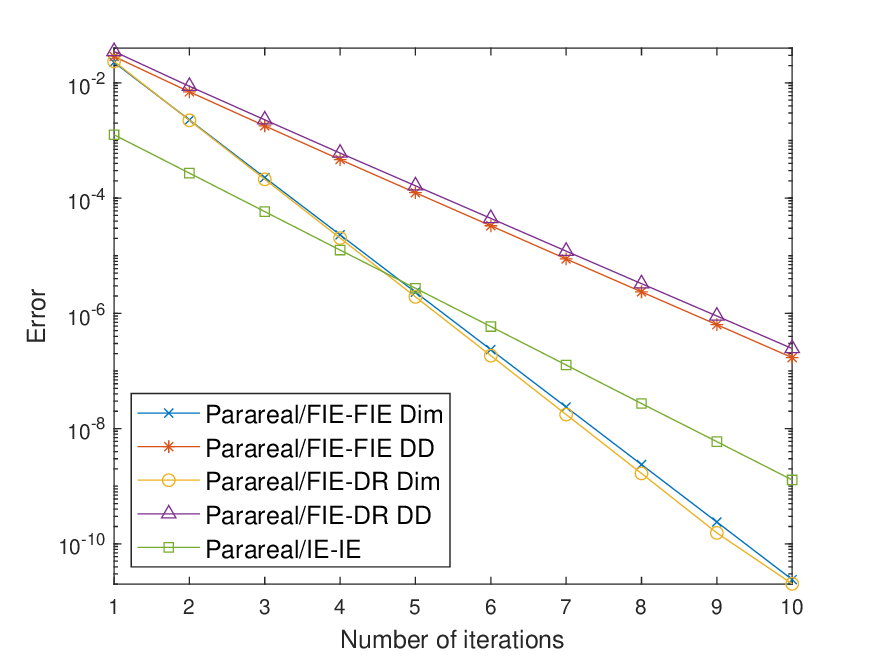}
		\caption{Evolution of the error of the space-time parallel methods and the Parareal/IE-IE method with respect to the fine approximation for the test problem \cref{heat_prob} ($d_{11} = 1 = d_{22}$, $d_{12} = 0$) for an increasing number of iterations. We set $T = 1$, $h = 10^{-3}$, $\Delta T = 5\cdot 10^{-2}$, and $\delta t = 2.5\cdot 10^{-3}$, for both dimensional (Dim) and domain decomposition (DD) splittings.}
		\label{fig:error_wrt_fine}
	\end{figure}
	
	Although the errors in \Cref{fig:error_wrt_fine} seem to decrease exponentially, describing a straight line in logarithmic scale, the graph for the Parareal/FIE-DR method with dimensional splitting changes its trajectory at the 10th iteration. The deterioration of convergence might be caused by the influence of the parameter $s$ on the convergence factor. As shown in \Cref{thm:FIE-FIE} and confirmed in \Cref{conv_FIE-FIE}, the convergence factor of the Parareal/FIE-FIE method is always bound from below by 1/3 and, even if $s$ is changed, its values are similar. On the other hand, if $s$ is increased for the Parareal/FIE-DR method, the interval where $\mathcal{K}_{\text{FIE-DR}}$ is close to 1 is displaced towards the left (see \Cref{conv_FIE-DR}), thus obtaining better convergence for a larger range of eigenvalues.
	
	\Cref{fig:error_wrt_s} confirms the previous claim, plotting the evolution of the error for changing $s$ for domain decomposition splitting. Whereas the convergence seems to worsen for the Parareal/FIE-FIE method for increasing $s$, the Parareal/FIE-DR method behaves just in the opposite manner, even losing the exponential decay of the error. A similar behaviour is observed for dimensional splitting.
	
	\begin{figure}[t]
		\centering
		\subfloat[Parareal/FIE-FIE]{\label{error_wrt_s_FIE}\includegraphics[width=0.465\textwidth]{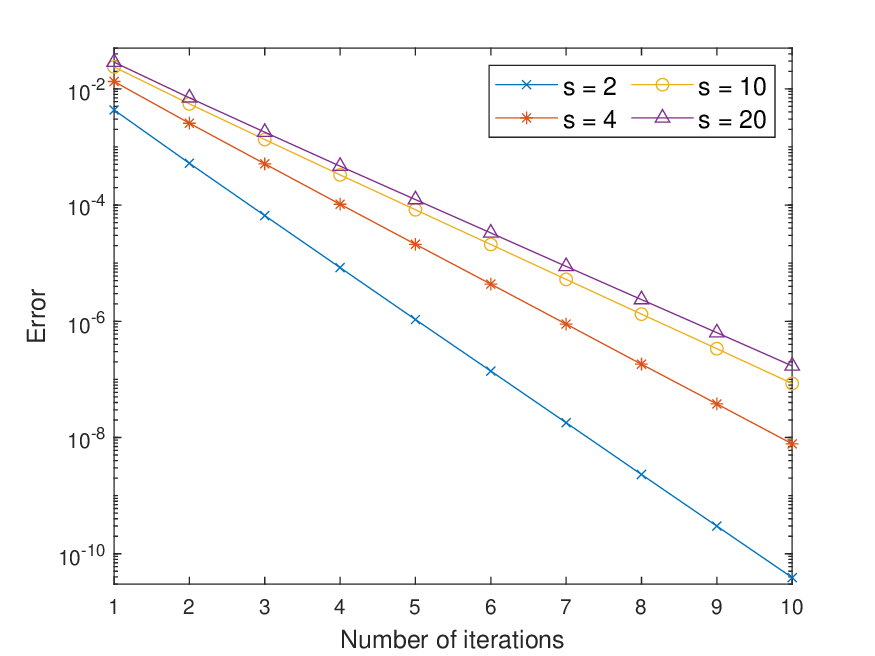}}
		\subfloat[Parareal/FIE-DR]{\label{error_wrt_s_DR}\includegraphics[width=0.465\textwidth]{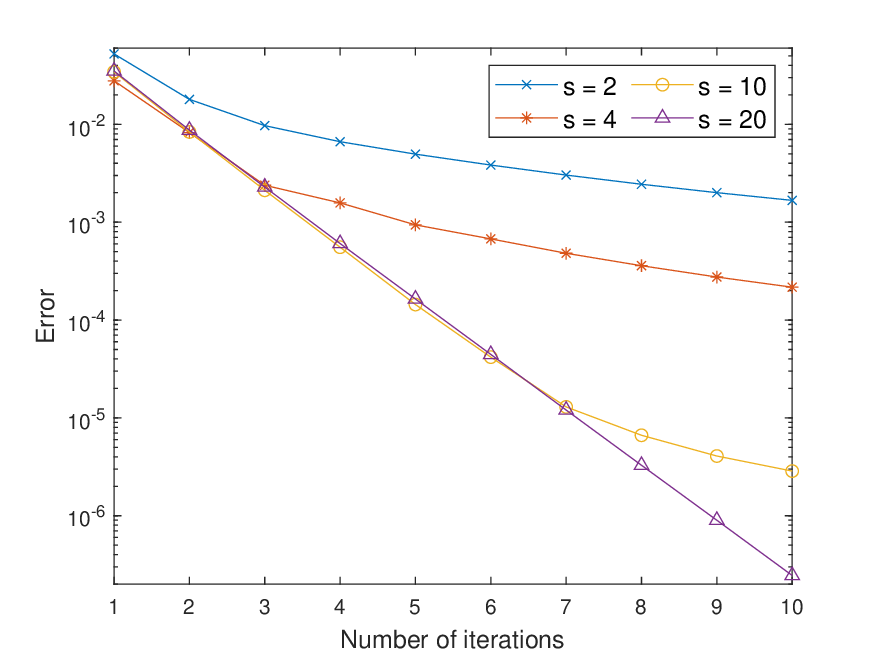}}
		\caption{Evolution of the error of the space-time parallel methods for the domain decomposition splitting with respect to the fine approximation for the test problem \cref{heat_prob} ($d_{11} = 1 = d_{22}$, $d_{12} = 0$) for an increasing number of iterations and changing $s$. We set $T = 1$, $h = 10^{-3}$, and $\Delta T = 5\cdot 10^{-2}$.}
		\label{fig:error_wrt_s}
	\end{figure}
	
	In addition, we analyze the robustness of the proposed solvers with respect to the discretization parameters $\Delta T$ and $h$. Both graphs in \Cref{fig:robustness_1} show the number of iterations required by the methods to achieve tolerance $\varepsilon = 10^{-6}$, for changing $\Delta T$ and $h$. Regarding robustness, the Parareal/IE-IE method and the newly proposed solvers with dimensional splitting show a similar behaviour, with minor variation in the number of iterations. In contrast, for domain decomposition splitting, the methods are robust with respect to $h$, but no longer with respect to the coarse time step $\Delta T$. However, the number of iterations increases at first when the coarse time mesh is refined, whereas for smaller values than $\Delta T = 1.25\cdot 10^{-2}$, the number of iterations is reduced for each new refinement. Thus, the number of iterations required is even reduced for small values of $\Delta T$.
	
	\begin{figure}[t]
		\centering
		\subfloat[Changing $\Delta T$]{\label{robustness_DT}\includegraphics[width=0.465\textwidth]{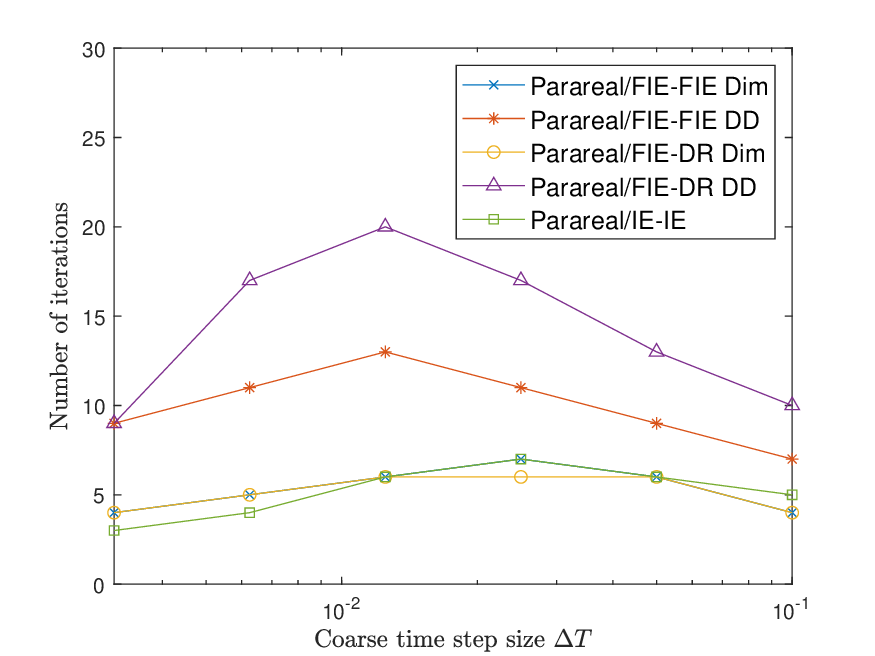}}
		\subfloat[Changing $h$]{\label{robustness_h}\includegraphics[width=0.465\textwidth]{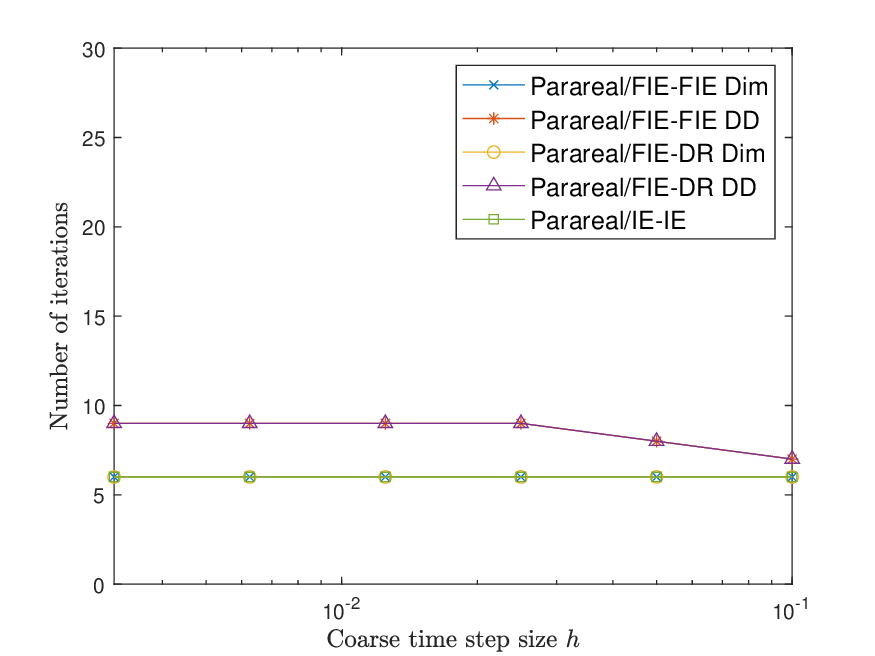}}
		\caption{Number of iterations of the presented methods for the test problem \cref{heat_prob} ($d_{11} = 1 = d_{22}$, $d_{12} = 0$), with $T = 1$, tolerance $\varepsilon = 10^{-6}$, and (a) $h = 0.01$, $s = 20$, and changing $\Delta T$; and (b) $\Delta T =0.05$, $s = 20$, and changing $h$.}
		\label{fig:robustness_1}
	\end{figure}
	
	Furthermore, let us analyze the behaviour of the newly proposed methods when varying the parameters of the domain decomposition, that is, the number of vertical strips for each subdomain, denoted by $q$, and the overlapping size $\beta$. It would be preferable to define a decomposition with as many strips as possible, and, thus, $\beta$ should not be large. \Cref{fig:robustness_2} shows the number of iterations of the methods for changing $q$ and $\beta$, and fixed tolerance $10^{-6}$. Remarkably, the methods are robust with respect to $q$, with really small variation in the number of iterations. However, the parameter $\beta$ influences significantly the behaviour of the methods, especially for the Parareal/FIE-DR method. Overall, the smaller $\beta$, the worse convergence properties of the solvers. Hence, small values of the overlapping size should be avoided. Note that this result is in agreement with the convergence behaviour of classical overlapping Schwarz domain decomposition methods.
	
	\begin{figure}[t]
		\centering
		\subfloat[Changing $q$]{\label{robustness_q}\includegraphics[width=0.465\textwidth]{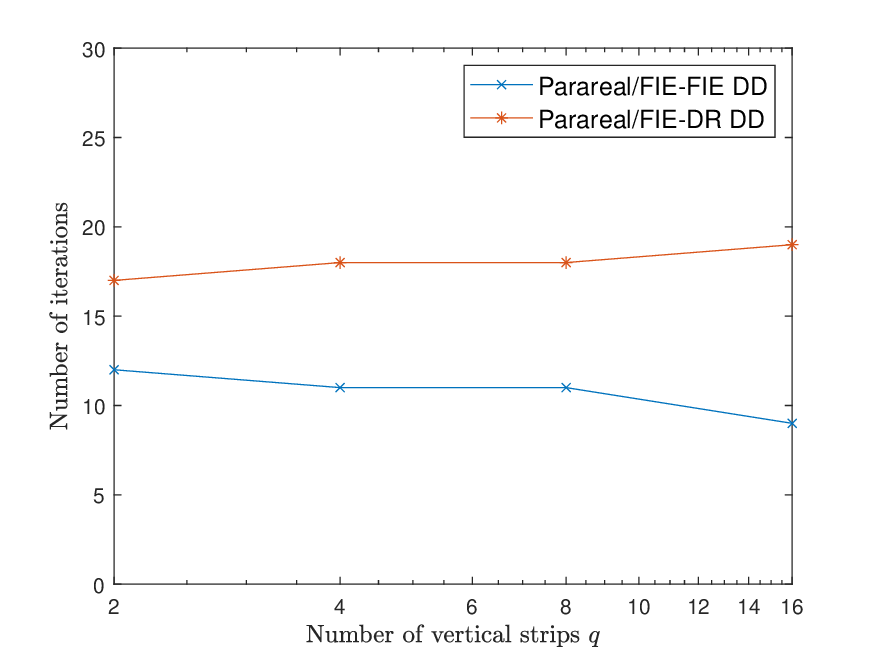}}
		\subfloat[Changing $\beta$]{\label{robustness_beta}\includegraphics[width=0.465\textwidth]{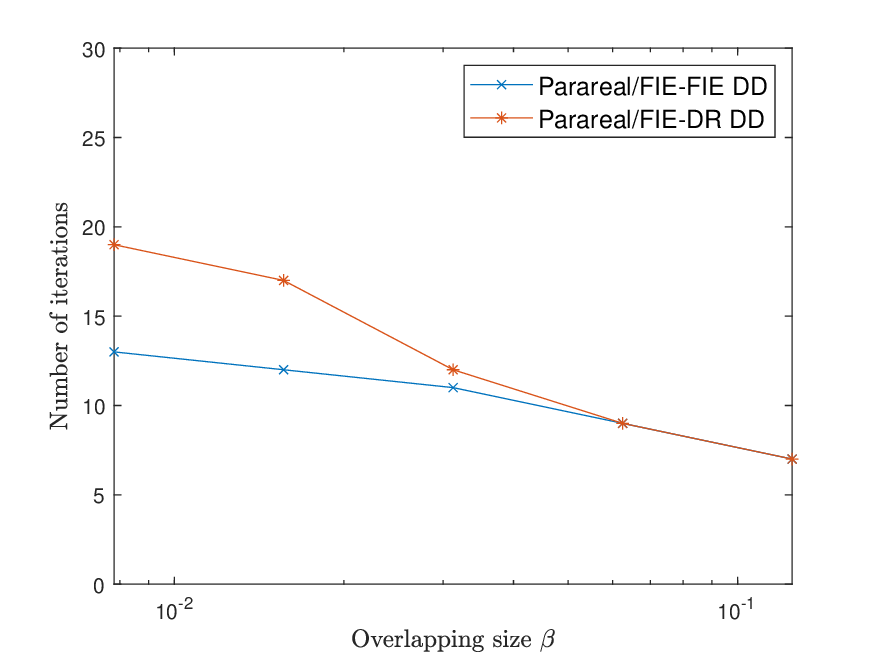}}
		\caption{Number of iterations of the presented methods for the test problem \cref{heat_prob} ($d_{11} = 1 = d_{22}$, $d_{12} = 0$), with $T = 1$, $h = 10^{-3}$, $\Delta = 0.05$, $\delta t = 2.5\cdot 10^{-3}$, and tolerance $\varepsilon = 10^{-6}$, for domain decomposition with (a) changing $q$ and $\beta = 1/64$; and (b) $q = 2$ and changing $\beta$.}
		\label{fig:robustness_2}
	\end{figure}
	
	Finally, a scalability analysis of the proposed methods seems essential in order to study the strengths and weaknesses of their parallelization power. This experiment considers only domain decomposition splittings for the partition of the elliptic operator, in the light of the versatility of the technique. We also restrict our attention to the Parareal/FIE-FIE method, since the Parareal/FIE-DR method performs similarly regarding computational time.
	
	Then, we set $h = 0.01$, $\Delta T = 1/32$, and $\delta t = 1/3200$ as the numerical parameters of the discrete problem. Moreover, let us consider two different domain decompositions for the two propagators. The number of strips will change for the coarse propagator, considering a constant overlapping size $\beta = 1/32$, whereas the decomposition will consist of two vertical strips and overlapping size $\beta = 1/16$ for the fine propagator. 
	
	In particular, we consider $q = 2$, $q = 4$, and $q = 8$ as the vertical strips for the domain decomposition which corresponds to the coarse propagator. Thus, the errors of the three different coarse propagators are $2.1373\cdot 10^{-1}$, $3.2813\cdot 10^{-1}$, and $2.2737\cdot 10^{-1}$, respectively. On the other hand, the error of the fine approximation is $5.2640\cdot 10^{-3}$ in this setting. Following the suggestions in \cite{ArteagaRuprechtKrause2015}, we set the number of iterations of the algorithm to 5, since after five iterations the obtained errors are $3.3274\cdot 10^{-3}$, $5.5688\cdot 10^{-3}$, and $5.9709\cdot 10^{-3}$, that is, about the same order of magnitude of the error obtained from the fine approximation.
	
	In this setting, the speedup is defined as
	\begin{displaymath}
		S(q) = \frac{T_{s}}{T_{p}(q)},
	\end{displaymath}
	where $T_s$ denotes the runtime in seconds when the problem is solved sequentially, and $T_{p}(q)$ stands for the runtime in seconds for solving the problem in parallel using $q$ processors. The numerical calculations are performed on a shared-memory machine with 256 GB of RAM that has 192 AMD CPU Epyc7513 2.6 GHz processors, with CESAR (Supercomputation Center of Aragon) being in charge of maintenance. 
	
	\Cref{fig:scalability} shows the runtime (left) and speedup (right) of the Parareal/FIE-FIE method in the aforementioned framework. Remarkably, promising scalability results are obtained that behave similarly to those corresponding to the parareal algorithm with no partitioning (cf. \cite{ArteagaRuprechtKrause2015}), with an improvement for larger number of processors. In addition, a certain improvement of the speedup is observed for an increasing number of vertical strips. It is noteworthy that, for an increasing number of processors, the speedup differences with respect to $q$ can be relevant, since the computational cost of the fine propagator decreases when more processors are available.
	
	\begin{figure}[t]
		\centering
		\subfloat[Runtime in seconds]{\label{CPUtime}\includegraphics[width=0.465\textwidth]{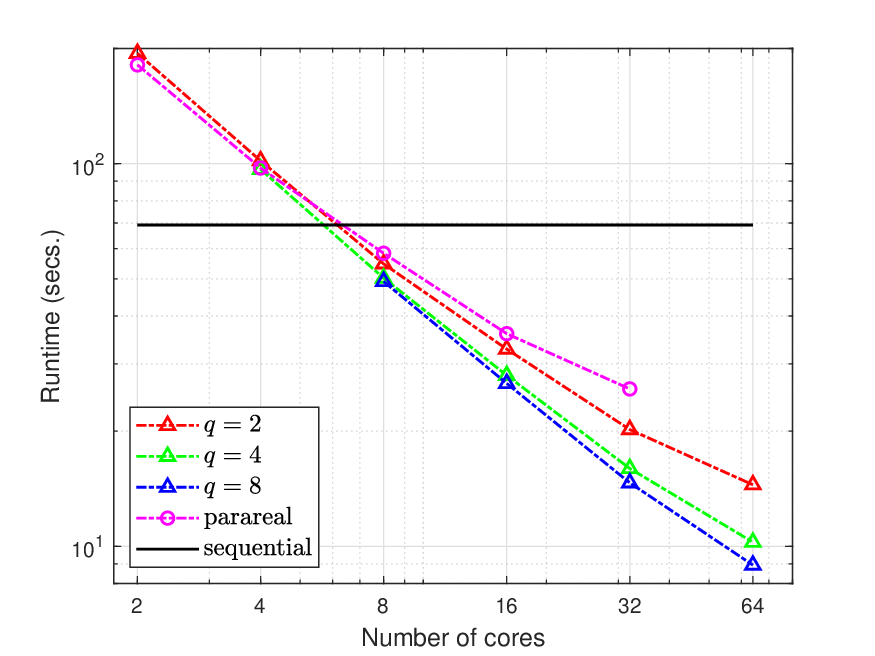}}
		\subfloat[Speedup]{\label{speedup}\includegraphics[width=0.465\textwidth]{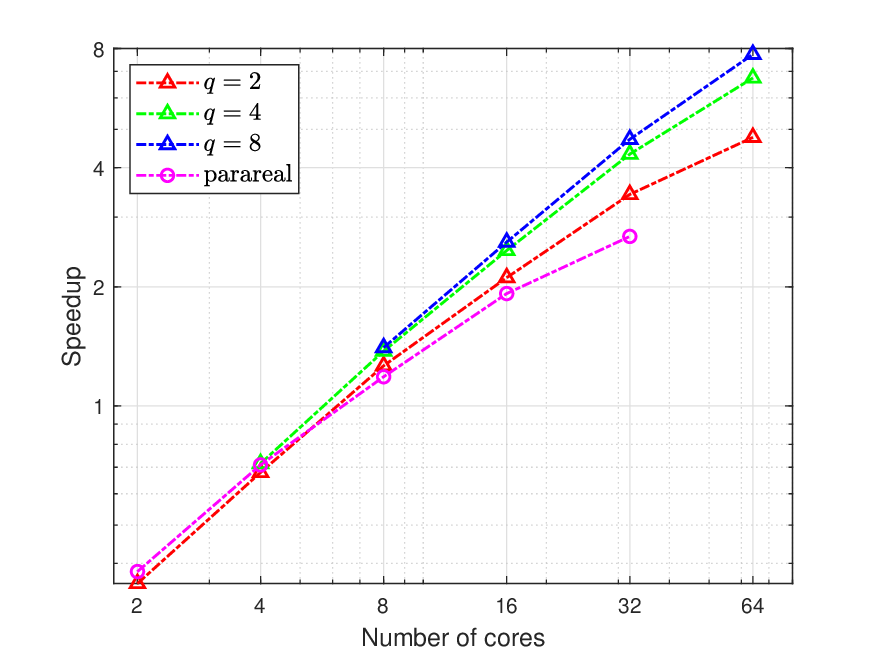}}
		\caption{Scalability analysis for the Parareal/FIE-FIE methos with domain decomposition splitting applied to the test problem \cref{heat_prob} ($d_{11} = 1 = d_{22}$, $d_{12} = 0$) for 5 iterations, with $h = 0.01$, $\Delta T = 1/32$, $\delta t = 1/3200$. We consider $q = 2,4,8$ and $\beta = 1/32$ for the coarse propagator and $q = 2$ and $\beta = 1/16$ for the fine propagator, and we also consider the parareal algorithm without partitioning.}
		\label{fig:scalability}
	\end{figure} 
	
	The results in \cite{ArrarasPorteroYotov2014} offer further insight into the scalability analysis of domain decomposition splitting on its own. Regarding dimensional splitting, although communication among processors increases, it seems to outperform the scalability results obtained for domain decomposition. The main reason is the simpler structure of the resulting matrices, since tridiagonal matrices (instead of pentadiagonal ones) are obtained in this case. Furthermore, fewer iterations are required in order to reach the desired tolerance.
	
	\subsection{Experiments with a full diffusion tensor}
	\label{susec:Dfull}
	
	Let us increase the complexity of problem \cref{heat_prob} by considering a diffusion tensor whose components are
	\begin{displaymath}
		d_{11}(\mathbf{x}) = d_{11}(x,y)  = \frac{1}{2 + \cos(3\pi x)\cos(2\pi y)} = d_{22}(x,y) = d_{22}(\mathbf{x}),
	\end{displaymath} 
	and $d_{12}(\mathbf{x}) = 1/4$, for $\mathbf{x} = (x,y) \in \Omega$. This tensor was considered as well in \cite{ArrarasPortero2015}. As explained in that reference, since the elliptic operator contains mixed partial derivatives, it is partitioned using the more flexible domain decomposition splitting technique. 
	
	We henceforth set $h = 10^{-3}$, $\Delta T = 5\cdot 10^{-2}$, and $\delta t = 2.5\cdot 10^{-3}$. Moreover, the same setting of the domain decomposition splitting as for the previous subsection is considered. Then, \Cref{fig:error_wrt_fine_Dfull} shows the evolution of the error of the approximate solutions with respect to the fine approximation for increasing number of iterations. The space-time parallel solvers converge more slowly than the Parareal/IE-IE method. Remarkably, since mixed partial derivatives are considered, systems of nonadiagonal matrices must be solved. Therefore, partitioning the elliptic operator by domain decomposition splitting yields faster performance of the solvers. Actually, for a sufficiently large number of processors, it can be faster than the Parareal/IE-IE method to reach certain tolerance.
	
	\begin{figure}[t]
		\centering
		\includegraphics[width=0.5\textwidth]{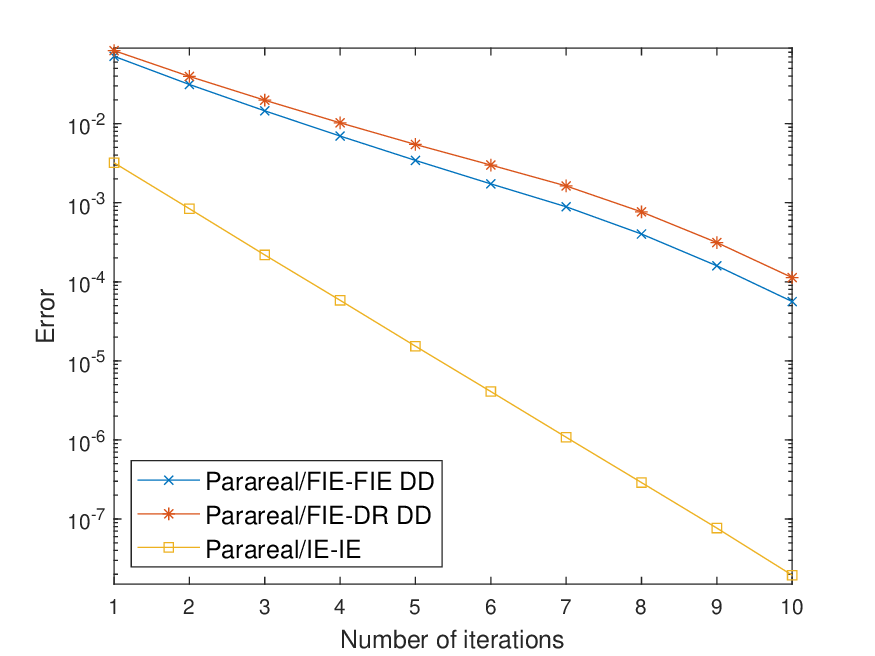}
		\caption{Evolution of the error of the space-time parallel methods and the Parareal/IE-IE method with respect to the fine approximation for the test problem \cref{heat_prob} (full $D$) for an increasing number of iterations. We set $T = 1$, $h = 10^{-3}$, $\Delta T = 5\cdot 10^{-2}$, and $\delta t = 2.5\cdot 10^{-3}$.}
		\label{fig:error_wrt_fine_Dfull}
	\end{figure}
	
	Once again, we analyze the influence of the parameter $s$ on the convergence of the methods. \Cref{fig:error_wrt_s_Dfull} depicts the evolution of the errors with respect to the fine approximations for the solutions of problem \cref{heat_prob}, with full tensor $D$, computed by both space-time parallel solvers and changing $s$. The Parareal/FIE-FIE method behaves similarly for the two problems under consideration throughout this section, that is, the lower the qoutient $s = \Delta T/\delta t$, the better convergence properties of the solver. Unlike for the problem with diagonal diffusion tensor with constant coefficients, for problem \cref{heat_prob} with a space-dependent full tensor, the influence of $s$ on the convergence of the Parareal/FIE-DR method is not that clear. Although for $s = 2$ there exists a significant deterioration of the convergence, for $s \geq 4$ no major difference can be found. The dependence of the diffusion coefficients on space seems to be the main reason for this behaviour. All in all, the increment of $s$ does not imply any deterioration in terms of convergence for the Parareal/FIE-DR method, even for such complex problems.
	
	\begin{figure}[t]
		\centering
		\subfloat[Parareal/FIE-FIE]{\label{error_wrt_s_FIE_Dfull}\includegraphics[width=0.465\textwidth]{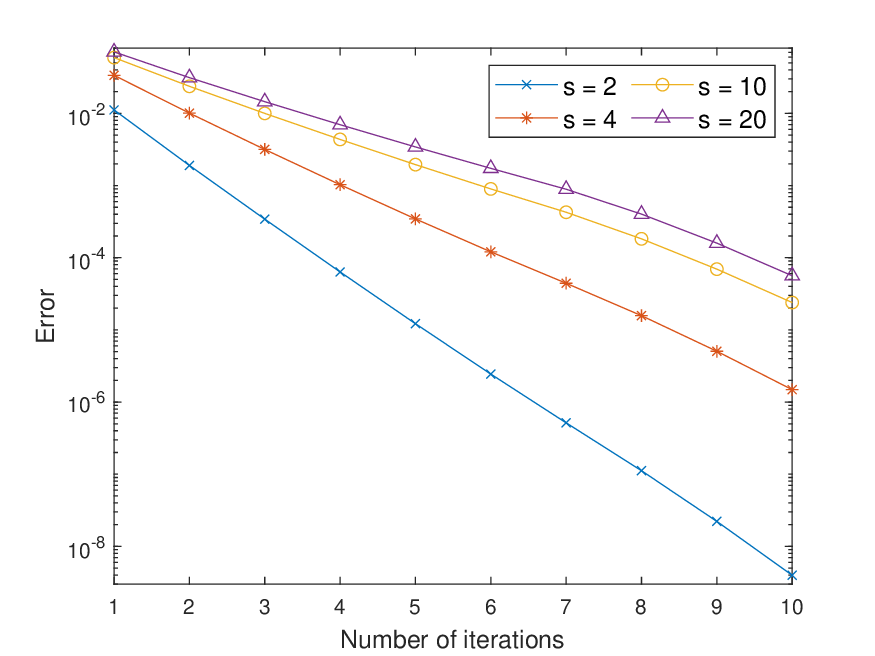}}
		\subfloat[Parareal/FIE-DR]{\label{error_wrt_s_DR_Dfull}\includegraphics[width=0.465\textwidth]{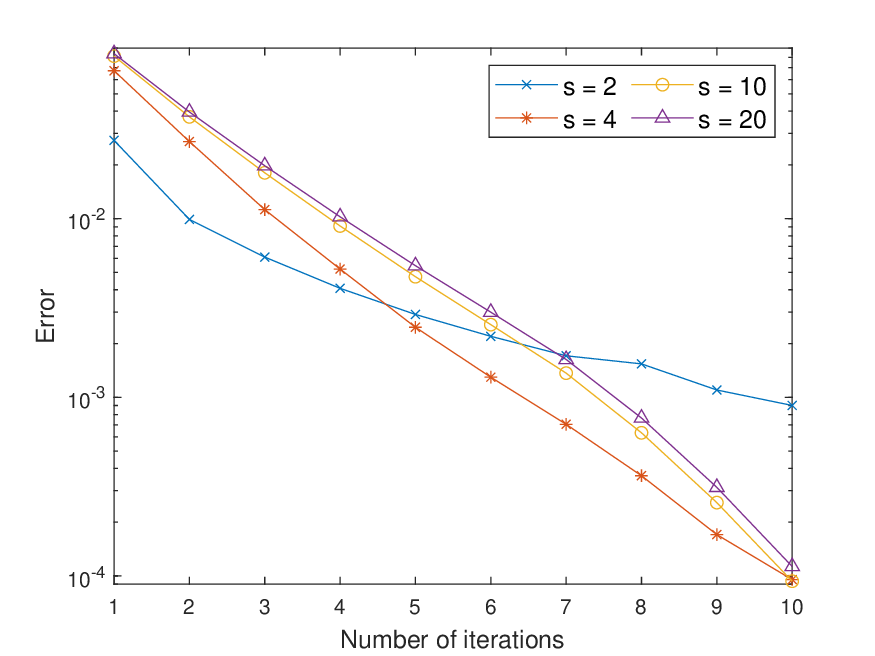}}
		\caption{Evolution of the error of the space-time parallel methods for the domain decomposition splitting with respect to the fine approximation for the test problem \cref{heat_prob} (full $D$) for an increasing number of iterations and changing $s$. We set $T = 1$, $h = 10^{-3}$, and $\Delta T = 5\cdot 10^{-2}$.}
		\label{fig:error_wrt_s_Dfull}
	\end{figure}
	
	\section{Concluding remarks}
	\label{sec:concl}
	
	The parareal algorithm has turned out to be a judicious choice in order to design space-time parallel solvers. Our approach is based on the combination of such an algorithm with splitting techniques. As shown in the present paper, suitable choices of splitting time integrators as the coarse and fine propagators ensure promising stability and convergence properties of the resulting solvers for the integration of parabolic problems. 
	
	As a matter of fact, one of the drawbacks of the parareal algorithm is solved by the newly proposed methods, that is, the large number of calculation cores which remain unused while the coarse propagator is performing. By considering time-splitting integrators for this propagator, the processors are invested in the parallelization in space, yielding faster computations of the propagator. This strategy has shown to improve the speedup of the proposed method, and, overall, satisfactory scalability properties have been obtained.
	
	Two partitioning strategies have been proposed in this work, i.e., dimensional and domain decomposition splittings. Although both partitioning criteria for the elliptic operator guarantee convergence for diffusion-dominated problems, as proven in \Cref{sec:conv}, dimensional splitting would appear to converge faster (even faster than for the unsplit version of the method). However, domain decomposition splitting has been shown to be more flexible, being able to consider, for instance, elliptic terms containing mixed partial derivatives (see \Cref{susec:Dfull}). In addition, both splitting techniques could be extended to 3D spatial domains, and, in particular, domain decomposition splitting could be adapted to more complex spatial discretizations, preserving the stability and convergence properties proven in this paper.
	
	Regarding the influence of the quotient between the coarse and fine time steps, namely, $s =\Delta T/\delta t$, the behaviour of both Parareal/FIE-FIE and Parareal/FIE-DR methods is the opposite. While the increment of $s$ implies certain deterioration of convergence in the former case, it appears that the convergence properties of the solver improve in the latter case when $s$ is increased.
	
	On the whole, the space-time parallel iterative solvers designed in this paper require few iterations to compute accurate approximations, with optimal use of the available processors. As a topic of future research, higher-order splitting time integrators should also be considered, specially for the fine propagator, in order to obtain more accurate fine approximations without loss of parallelization power. All in all, the key idea of our proposal seems to be an easy and powerful strategy for the construction of space-time parallel methods, replacing the parareal algorithm by some other relevant parallel-in-time integrators, such as, for example, the well-known MGRIT method (cf. \cite{FalgoutFriedhoffKolevMaclachlanSchroder2014}).
	
	\bibliographystyle{siamplain}
	\bibliography{references}
\end{document}